\numberwithin{equation}{section}
\newtheorem{theorem}{Theorem}[section]
\theoremstyle{remark}
\newtheorem{remark}{Remark}[section]
\theoremstyle{definition}
\newtheorem{asumption}{Asumption}[section]
\newtheorem*{acknowledgments}{Acknowledgments}
\newcommand{\triple}[1]{{|\!|\!|#1|\!|\!|}}
\newcommand{\R}{\mathbb{R}}
\newcommand{\Z}{\mathbb{Z}}
\begin{document}


\title[$L^p$-$L^q$ estimates for Electromagnetic Helmholtz equation. Singular potentials]
{$L^p$-$L^q$ estimates for Electromagnetic Helmholtz equation. Singular potentials}

\author{Andoni Garcia}
\address{Andoni Garcia: Department of Mathematics and Statistics, P.O. Box 35 (MaD), FI-40014, University of Jyv\"{a}skyl\"{a}, Finland}
\email{andoni.a.garcia@jyu.fi}



\begin{abstract}
In space dimension $n\geq3$, we consider the electromagnetic Schr\"odinger Hamiltonian $H=(\nabla-iA(x))^2+V$ and the corresponding Helmholtz equation
  \begin{equation*}
   (\nabla-iA(x))^2u+u+V(x)u=f\quad \text{in}\quad \mathbb{R}^n,
  \end{equation*}
where the magnetic and electric potentials are allowed to have singularities at the origin and decay at infinity. We extend the well known $L^p$-$L^q$ estimates for the solution of the free Helmholtz equation to the case when the electromagnetic hamiltonian $H$ is considered. This work extends the results that appear in \cite{G}.
 \end{abstract}

\date{\today}

\subjclass[2010]{35J10, 35L05, 58J45.}
\keywords{%
dispersive equations, Helmholtz equation,
magnetic potential}

\maketitle

\section{Introduction}\label{sec:intro}
In this paper we extend our previous results appearing in \cite{G}, where $L^p$-$L^q$ estimates for the solution of the electromagnetic Helmholtz equation were proven, to the case of singular potentials $A$ and $V$. We want to preserve the behavior of the potentials at infinity, that is, consider potentials with short-range decay without assuming smallness. Our goal will be to extend the well known $L^{p}$-$L^{q}$ estimates for the free Helmholtz equation given in \cite{KRS}, \cite{CS}, \cite{Gut} and \cite{Gut1}, to the case when we perturb the equation with an electromagnetic potential with singularities at the origin. More precisely, conditions on the electric and the magnetic part of the potential will be given in order to ensure that the estimates remain true. The $L^p$-$L^q$ estimates for the free Helmholtz equation are the following
\begin{equation}\label{eq:lpestimates}
\|u\|_{L^q(\R^n)}\leq C\|f\|_{L^q(\R^n)},
\end{equation}
where $u$ is a solution of
\begin{equation}
\Delta u + (\tau\pm i\varepsilon) u=-f\quad\tau,\epsilon >0.
\end{equation}
The exponents $p$ and $q$ in \eqref{eq:lpestimates} have to verify some specific conditions that will be specified later on. Here $C$ can depend on $\tau$, $p$, $q$ and $n$ and is independent of $\epsilon$.

In the first part of the paper we will prove that the existing results for the free Helmholtz equation can be extended to the perturbed equation by imposing conditions on the admissible singularity at the origin and decay at infinity for the potentials. This can be done without assuming smallness at infinity, neither for the electric part, nor for the magnetic part. It will be concluded that the range for $p$ and $q$ where estimates \eqref{eq:lpestimates} are true is not the same as the one for the free Helmholtz equation. Therefore this motivates the second part of the paper in which, by setting $A\equiv 0$, we consider the Helmholtz equation with electric potential and prove estimates \eqref{eq:lpestimates} in the same region of boundedness of the free equation.

Therefore, we consider the electromagnetic Schr\"odinger hamiltonian $H$ of the form
\begin{equation}
H=(\nabla-i A(x))^2 + V(x),
\end{equation}
and the Helmholtz equation in dimensions $n\geq3$, namely,
\begin{equation}\label{eq:Helmmagneticin}
	(\nabla-iA(x))^2u+u+V(x)u=f\quad \text{in}\quad \mathbb{R}^n.
\end{equation}
Here, $A:(A^1,\dots, A^n):\mathbb{R}^n\rightarrow\mathbb{R}^n$ is the magnetic potential and $V(x):\mathbb{R}^n\rightarrow\mathbb{R}$ is the electric potential. Since now on, we denote by
\begin{equation}\label{eq:maggrad}
	\nabla_{A}=\nabla-iA,\qquad\Delta_{A}=\nabla_{A}^2.
\end{equation}
The magnetic potential $A$ is a mathematical construction which describes the interaction of particles with an external magnetic field. The magnetic field $B$, which is the physically measurable quantity, is given by
\begin{equation}\label{eq:B}
  B\in\mathcal M_{n\times n},
  \qquad
  B=DA-(DA)^t,
\end{equation}
i.e. it is the anti-symmetric gradient of the vector field $A$ (or, in geometrical terms, the differential $dA$ of the 1-form which is standardly associated to $A$). In dimension $n=3$ the action of $B$ on vectors is identified with the vector field $\text{curl} A$, 
\begin{equation}\label{eq:B3}
  Bv=\text{curl} A\times v
  \qquad
  n=3,
\end{equation}
where the cross denotes the vectorial product in $\R^3$.

\noindent We also define the trapping component of $B$ as
\begin{equation}\label{eq:Btau}
B_\tau(x)=\frac{x}{|x|} B(x),\qquad (B_\tau)_j=\sum_{k=1}^n\frac{x_k}{|x|}B_{kj}
\end{equation}
and we say that $B$ is non trapping if $B_\tau$=0. Observe that in dimension $n=3$ it coincides with 
\begin{equation}
B_\tau(x):=\frac{x}{|x|}\times \text{curl} A(x).
\end{equation}
Hence, $B_\tau(x)$ is the projection of $B$ on the tangential space in $x$ to the sphere of radius $|x|$, for $n=3$. Observe also that $B_\tau\cdot x=0$ for any $n\geq2$, therefore $B_\tau$ is a tangential vector field in any dimension and we call it the tangential component of the magnetic field $B$.

In order to ensure the self-adjointness of $H$ we require some local integrability conditions on the potentials. We will assume 
\begin{equation}\label{eq:integrability}
A_j\in L^2_{loc}(\R^n), \quad V\in L^1_{loc}(\R^n),\quad\int V|u|^2 dx\leq \nu\int|\nabla u|^2 dx,\quad 0<\nu<1.
\end{equation}
From this assumptions it can be concluded (See \cite{Z1}) that $H$ is self-adjoint on $L^2(\R^n)$ with form domain
\begin{equation*}
D(H)=\{f\in L^2(\R^n): \int |\nabla_A f|^2 dx - \int V |f|^2 dx < \infty\}.
\end{equation*}
Note that by \eqref{eq:integrability} then $D(H)$ is equivalent to the Hilbert space
\begin{equation*}
H^1_A(\R^n)=\{f\in L^2(\R^n) : \int |\nabla_A f|^2 dx < \infty\}.
\end{equation*}
Since the spectrum of a self-adjoint operator is real, we conclude the existence of solution of 
\begin{equation}
Hu+(1\pm i\epsilon)u = f\quad \text{in}\quad\R^n,\quad\epsilon\neq0,
\end{equation}
for any $f\in L^2(\R^n)$ and $u\in H^1_A(\R^n)$. See \cite{IK}, \cite{LS}, \cite{AHS} or \cite{CFKS} for more details about the self-adjointness of the electromagnetic Schr\"odinger operator $H$.

The main difficulty in order to obtain the estimates is that when the magnetic laplacian is expanded we have to deal with a first order term, namely $A\cdot\nabla$, and it is well known that there are no $L^{p}$-$L^{q}$ estimates for the gradient of the solution of the free Helmholtz equation,
\begin{equation}
\Delta u+u=-f\quad \text{in}\quad \mathbb{R}^n.
\end{equation}
The main result of the paper appears in Section \ref{sec:magnetic}, concretely Theorem \ref{thm:elecmag}. 

We will proceed in the following way. Let us consider the modified Helmholtz equation with electromagnetic potential and fixed frequency $\tau=1$. It is given by
\begin{equation}\label{eq:Helmagmo}
	(\nabla-iA(x))^2u+(1\pm i\epsilon)u+V(x)u=f\quad \text{in}\quad \mathbb{R}^n,\quad \epsilon\neq0.
\end{equation}
\begin{remark}
For convenience we will deal only with the case $\tau=1$, in contrast with the case of general $\tau>0$.
\end{remark}
In order to derive the estimates for the solution of  \eqref{eq:Helmmagneticin}, $L^{p}$-$L^{q}$ estimates, independent of $\epsilon$, will be obtained for the solution of the modified Helmholtz equation with electromagnetic potential \eqref{eq:Helmagmo}. The independence of $\epsilon$ for the estimates will imply that these will remain true for the solution of \eqref{eq:Helmmagneticin}. This is true due some results in \cite{Z}.

Our method is a mixture of a priori estimates and perturbative arguments. This is what allows us to avoid smallness conditions in the potentials. Similar arguments have been used in the setting of the free Schr\"odinger equation, as can be seen in \cite{BPST} and \cite{DFVV}. Along the proof our basic tools will be the corresponding  $L^{p}$-$L^{q}$ estimates and a $L^{2}$-local estimate for the solution of the free Helmholtz equation, together with an a priori estimate for the solution of the modified Helmholtz equation with electromagnetic potential \eqref{eq:Helmagmo}.

Let us introduce some notation. For $f:\mathbb{R}^n\to\mathbb{C}$ we define the Morrey-Campanato norm as 
\begin{equation}\label{eq:triplenorma}
\triple{f}^2:=\sup_{R>0}\frac{1}{R}\int_{|x|\leq R}|f|^2dx.
\end{equation}
Moreover, we denote, for $j\in \mathbb{Z}$, the annulus C(j) by 
\begin{equation*}
C(j)=\{x\in\mathbb{R}^n:2^j\leq|x|\leq2^{j+1}\},
\end{equation*}
\begin{equation}\label{eq:N}
	N(f):=\sum_{j\in \mathbb{Z}}\left(2^{j+1}\int_{C(j)}|f|^2dx\right)^{1/2},
\end{equation}
and we easily see the duality relation
\begin{equation*}
\int fgdx\leq\triple{g}\cdot N(f).
\end{equation*}
These norms were introduced by Kenig, Ponce and Vega in \cite{KPV}.



Concerning the perturbative part of our argument, it is necessary to remind the results that are true for the free Helmholtz equation. Firstly, we are going to state the result concerning the $L^{p}$-$L^{q}$ estimates which appears in \cite{KRS}, \cite{Gut} and \cite{Gut1}. 
Let 
\begin{align*}
&A=\left(\frac{n+3}{2n},\frac{n-1}{2n}\right),\qquad A'=\left(\frac{n+1}{2n},\frac{n-3}{2n}\right)\\
&
B=\left(\frac{n^2+4n-1}{2n(n+1)},\frac{n-1}{2n}\right),\qquad B'=\left(\frac{n+1}{2n},\frac{n^2-2n+1}{2n(n+1)}\right)\nonumber\\
\nonumber
\end{align*}
and $\Delta(n)$, for $n\geq3$, is the set of points of $[0,1]\times[0,1]$ given by

\begin{equation}
	\Delta(n)=\left\{\left(\frac{1}{p},\frac{1}{q}\right)\in[0,1]^2:\frac{2}{n+1}\leq \frac{1}{p}-\frac{1}{q}\leq\frac{2}{n}, \frac{1}{p}>\frac{n+1}{2n}, \frac{1}{q}<\frac{n-1}{2n}\right\}.
\end{equation}
The set $\Delta(n)$ is the trapezium $A$$B$$B'$$A'$ with the closed line segments $AB$ and $B'A'$ removed (See Figure \ref{fig:deltan}). 


\begin{figure}[!htb]
\begin{center}
\includegraphics[]{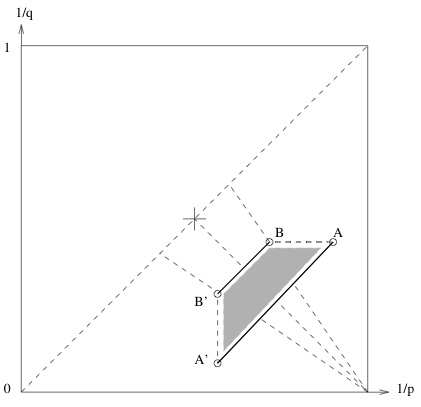}
\caption{$\Delta(n)$, $n\geq3$.}
\label{fig:deltan}
\end{center}
\end{figure}
Now, we are in conditions to recall the existing result for the Helmholtz equation with constant coefficients. 
\begin{remark}
In \cite{Gut} (See also \cite{Gut1}), estimates for the solution of the equation perturbed with generals $\tau>0$ and $\epsilon>0$ are given, namely, the equation reads as follows,
\begin{equation}
\Delta u+(\tau+i\epsilon)u=-F,\quad\tau,\epsilon>0.
\end{equation}
\noindent Recall that we will only deal with the case of fixed frequency $\tau=1$.
\end{remark}
The result is the following one.

\begin{theorem}\label{thm:Helmfree}
	Let u be a solution of 
	\begin{equation}
		\Delta u+(1+i\epsilon)u=-F,\quad\epsilon>0.
	\end{equation}
	Then, there exists a constant C, independent of $\epsilon$, such that 
	\begin{equation}\label{eq:estfree}
		\|u\|_{L^q(\mathbb{R}^n)}=\|(\Delta+(1+i\epsilon))^{-1}F\|_{L^q(\mathbb{R}^n)}\leq C\|F\|_{L^p(\mathbb{R}^n)}
	\end{equation}
	when $(\frac{1}{p},\frac{1}{q})\in\Delta(n)$, $n\geq3$.
\end{theorem}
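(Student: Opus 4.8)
The plan is to recognize the solution operator $(\Delta + (1+i\epsilon))^{-1}$ as a Fourier multiplier and to prove uniform-in-$\epsilon$ bounds by isolating the singularity of its symbol on the unit sphere. Taking Fourier transforms in $\Delta u + (1+i\epsilon)u = -F$ gives $\widehat u(\xi) = (|\xi|^2 - 1 - i\epsilon)^{-1}\widehat F(\xi)$, so that $u = T_\epsilon F$ with symbol
\begin{equation*}
m_\epsilon(\xi) = \frac{1}{|\xi|^2 - 1 - i\epsilon}.
\end{equation*}
The only obstruction to boundedness as $\epsilon\to0$ is the blow-up of $m_\epsilon$ on the characteristic sphere $\Sigma = \{|\xi| = 1\}$. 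I fix a smooth cutoff $\psi$ equal to $1$ on a neighbourhood of $\Sigma$ and supported in $\{\tfrac12 \le |\xi| \le 2\}$, and split $m_\epsilon = \psi\, m_\epsilon + (1-\psi)\, m_\epsilon =: m^{\mathrm{near}}_\epsilon + m^{\mathrm{far}}_\epsilon$, treating the two operators separately and requiring each to be bounded $L^p\to L^q$ on all of $\Delta(n)$.

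The far piece carries no real difficulty. On the support of $1-\psi$ the denominator is bounded away from $0$ uniformly in $\epsilon$, and one checks the symbol estimates $|\partial^\alpha m^{\mathrm{far}}_\epsilon(\xi)| \lesssim \xxi^{-2-|\alpha|}$ with constants independent of $\epsilon$. Thus $T_{m^{\mathrm{far}}}$ behaves like a Bessel potential of order $2$: combining the Mikhlin--H\"ormander multiplier theorem with the Hardy--Littlewood--Sobolev inequality (equivalently, Sobolev embedding) yields $T_{m^{\mathrm{far}}}\colon L^p\to L^q$ uniformly in $\epsilon$ whenever $0 \le \frac1p - \frac1q \le \frac2n$, which in particular covers the whole of $\Delta(n)$.

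The near piece is the crux. I decompose dyadically in the distance to $\Sigma$: choose $\beta_k$ supported where $\big||\xi|-1\big| \sim 2^{-k}$ for $k\ge1$ (with $\beta_0$ absorbing the fixed-distance part), so that $m^{\mathrm{near}}_\epsilon = \sum_{k\ge0}\beta_k\, m_\epsilon$. On the $k$-th shell the multiplier has size $\sim \min(2^k, \epsilon^{-1})$ and is supported in a slab of thickness $\sim 2^{-k}$ around $\Sigma$. The essential input is that such a slab, foliated by the dilated spheres $\{|\xi| = r\}$, is governed by the Stein--Tomas extension estimate $\|Eg\|_{L^q(\R^n)}\lesssim \|g\|_{L^2(\Sigma)}$ for $q\ge \frac{2(n+1)}{n-1}$ and its dual restriction estimate, applied leaf-by-leaf and combined by Minkowski's inequality and Cauchy--Schwarz across the $2^{-k}$-interval of radii. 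This produces, uniformly in $\epsilon$, a bound for $T_{\beta_k m_\epsilon}\colon L^p\to L^q$ that is $O(1)$ on the Stein--Tomas line $\frac1p - \frac1q = \frac{2}{n+1}$ (the edge $BB'$) and that gains a factor $2^{-\delta k}$, $\delta>0$, strictly off that line; summing the resulting geometric series delivers the near estimate on and near $BB'$. The nonvanishing Gaussian curvature of $\Sigma$ is exactly what powers the extension estimate, and this is where I expect the main difficulty to lie.

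Finally I interpolate. With the near-sphere estimate established on the Stein--Tomas line $\frac1p-\frac1q = \frac{2}{n+1}$ and the far estimate available up to the line $\frac1p-\frac1q = \frac2n$, complex interpolation fills the interior of the trapezium $ABB'A'$; the strict inequalities $\frac1p > \frac{n+1}{2n}$ and $\frac1q < \frac{n-1}{2n}$, which delete the open edges $AB$ and $A'B'$, reflect the duality relation together with the failure of the restriction estimates at those endpoints. Adding $T_{m^{\mathrm{near}}}$ and $T_{m^{\mathrm{far}}}$ then gives the claimed uniform bound \eqref{eq:estfree} throughout $\Delta(n)$. The soft ingredients (multiplier theory, interpolation, duality) are routine; the genuine content, and the step I would spend the most effort on, is the uniform control of the near-sphere operator via restriction/extension theory.
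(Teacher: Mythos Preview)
The paper does not prove Theorem~\ref{thm:Helmfree}: it is recalled as a known result from \cite{KRS}, \cite{Gut}, \cite{Gut1} and then used as a black box in the perturbative arguments of Section~\ref{sec:magnetic}. So there is no proof in the paper to compare your proposal against. That said, your outline is essentially the standard argument in those references---write the solution operator as a Fourier multiplier, localize near and far from the unit sphere, treat the far piece by Mikhlin--H\"ormander plus Sobolev embedding, treat the near piece via the Stein--Tomas restriction theorem combined with a dyadic decomposition in the distance to the sphere, and interpolate to fill the trapezium. As a strategy this is correct and matches the cited literature.

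One technical point deserves care. On the Stein--Tomas line $\tfrac1p-\tfrac1q=\tfrac{2}{n+1}$ (the edge $BB'$, which belongs to $\Delta(n)$ except for its endpoints), your dyadic pieces $T_{\beta_k m_\epsilon}$ are each $O(1)$ but there are $\sim\log(1/\epsilon)$ of them before the multiplier saturates at size $\epsilon^{-1}$, so the naive sum produces a logarithmic divergence as $\epsilon\to0$. The references avoid this either by working directly with the explicit resolvent kernel and stationary-phase/Bessel-function asymptotics, or by recognizing that the limiting imaginary part is exactly the surface measure $d\sigma_\Sigma$ and invoking the endpoint Stein--Tomas theorem once rather than shell-by-shell. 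Your sketch handles the open interior of $\Delta(n)$ cleanly; getting the closed edge $BB'$ uniformly in $\epsilon$ requires that extra step.
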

As we mentioned, another tool that will be crucial in the proof is an $L^2$-local estimate, which bounds the Morrey-Campanato norm of the solution of the free equation, defined in \eqref{eq:triplenorma}, in terms of the $L^p$ norm of the RHS data. This theorem also appears in \cite{RV}, \cite{Gut} and \cite{Gut1}. The statement is the following.

\begin{theorem}\label{thm:triple}Let u be a solution of 
\begin{equation}
		\Delta u+(1+i\epsilon)u=-F,\quad \epsilon>0.
\end{equation}
If 
\begin{itemize}
\item[\textit{(i)}]
$n=3$\quad\text{and} \quad$\frac{1}{4}\leq\frac{1}{p}-\frac{1}{2}<\frac{1}{2}$,\quad or
\item[\textit{(ii)}]
$n\geq4$\quad\text{and} \quad$\frac{1}{n+1}\leq\frac{1}{p}-\frac{1}{2}\leq\frac{3}{2n}$,\quad
\end{itemize}
then, there exists a constant $C$, independent of $\epsilon$, such that
\begin{equation}\label{eq:tripletrun}
	\sup_{R>0}\left(\frac{1}{R}\int_{B_{R}}|u(x)|^2dx\right)^{1/2}\leq C\|F\|_{L^p(\mathbb{R}^n)}.
\end{equation}
\end{theorem}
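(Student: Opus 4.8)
The plan is to pass to the Fourier side, where the solution operator is the multiplier $\hat{u}(\xi)=(|\xi|^2-1-i\epsilon)^{-1}\hat{F}(\xi)$, whose only singularity lives in an $O(\epsilon)$-neighbourhood of the characteristic sphere $\{|\xi|=1\}$. I would first insert a smooth partition of unity separating the frequencies into a piece $m_{\mathrm{far}}$ supported where $\big||\xi|-1\big|\gtrsim 1$ and a piece $m_{\mathrm{near}}$ concentrated in a unit neighbourhood of the sphere, and then control $\triple{u}$ for each contribution separately. The conceptual point is that the two endpoints of the admissible range of $p$ come from genuinely different mechanisms: the upper endpoint $\tfrac1p-\tfrac12=\tfrac{3}{2n}$ is dictated by the high-frequency tail of $m_{\mathrm{far}}$, whereas the lower endpoint $\tfrac1p-\tfrac12=\tfrac1{n+1}$ is dictated by restriction theory applied to $m_{\mathrm{near}}$.

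For the far piece, the high-frequency part of $m_{\mathrm{far}}$ behaves like the symbol $|\xi|^{-2}$, so the associated operator is comparable to the Riesz potential $(-\Delta)^{-1}=I_2$, and I would estimate $\triple{I_2 F}$ directly. Here a scaling computation pins down the endpoint: since $\triple{h(\lambda\,\cdot)}=\lambda^{(1-n)/2}\triple{h}$ and $I_2(F(\lambda\,\cdot))=\lambda^{-2}(I_2F)(\lambda\,\cdot)$, the estimate $\triple{I_2F}\lesssim\|F\|_{L^p}$ is scale invariant exactly when $\tfrac1p-\tfrac12=\tfrac{3}{2n}$, which is therefore the critical exponent for the tail and forces the upper endpoint. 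Concretely one composes the Hardy--Littlewood--Sobolev bound $I_2\colon L^p\to L^{2n/(n-1)}$ with the scale-invariant embedding $\triple{g}\lesssim\|g\|_{L^{2n/(n-1)}}$, the latter being the single Lebesgue exponent for which $\frac1R\int_{B_R}|g|^2$ is uniformly bounded. The bounded intermediate-frequency remainder of $m_{\mathrm{far}}$ is a compactly supported smooth multiplier and is harmless, its contribution being controlled through Hölder on each ball together with Theorem~\ref{thm:Helmfree}.

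For the singular piece I would decompose $m_{\mathrm{near}}=\sum_{k\ge 0}m_k$ dyadically in the distance to the sphere, with $m_k$ supported where $\big||\xi|-1\big|\sim 2^{-k}$ and the sum effectively truncated at $2^{-k}\sim\epsilon$; this truncation is the only place $\epsilon$ enters, so every bound produced will be $\epsilon$-independent. On each shell the decisive input is the thickened Stein--Tomas restriction estimate, which costs a factor $\delta^{1/2}$ with $\delta=2^{-k}$ relative to the sharp $L^2(S^{n-1})$ restriction bound and is valid precisely for $\tfrac1p-\tfrac12\ge\tfrac1{n+1}$, yielding the lower endpoint. Morally $m_{\mathrm{near}}$ factors, in the limit $\epsilon\to0$, through restriction to and extension from the sphere, the extension half being controlled in Morrey--Campanato norm by the Agmon--H\"ormander trace estimate $\triple{\mathcal Eg}\lesssim\|g\|_{L^2(S^{n-1})}$. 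I would then sum the shell contributions against the weight $\tfrac1R$ that defines $\triple{\cdot}$, separating the regimes $R\lesssim 2^{k}$ and $R\gtrsim 2^{k}$, so that the $\delta^{1/2}$ gains assemble into a convergent geometric series.

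The hard part is precisely this final summation: producing the scale-invariant Morrey--Campanato norm, rather than a fixed-scale local $L^2$ bound, uniformly in $\epsilon$, which demands a careful balance between the physical ball radius $R$ and the frequency-shell thickness $2^{-k}$, and it is here that one must use the full strength of the thickened restriction estimate at the endpoint $\tfrac1p-\tfrac12=\tfrac1{n+1}$ together with the Agmon--H\"ormander estimate. Finally, the dimensional split in the statement ($n=3$ with a strict upper inequality versus $n\ge4$ with a closed one) simply reflects that for $n=3$ the critical value $\tfrac{3}{2n}=\tfrac12$ coincides with the boundary $p=1$, where the restriction and Hardy--Littlewood--Sobolev inputs degenerate and which must therefore be excluded.
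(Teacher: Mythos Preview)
The paper does not actually prove Theorem~\ref{thm:triple}: it is stated as a known result and attributed to \cite{RV}, \cite{Gut} and \cite{Gut1}, and is then used as a black box in the perturbative argument for Theorem~\ref{thm:elecmag}. So there is no ``paper's own proof'' to compare against.

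That said, your outline is essentially the argument one finds in those references. The decomposition into a far piece (controlled by Hardy--Littlewood--Sobolev together with the scale-invariant embedding $L^{2n/(n-1)}\hookrightarrow\triple{\cdot}$, giving the upper endpoint $\tfrac{3}{2n}$) and a near piece (controlled by a dyadic thickened Stein--Tomas restriction estimate combined with the Agmon--H\"ormander trace bound $\triple{\mathcal{E}g}\lesssim\|g\|_{L^2(S^{n-1})}$, giving the lower endpoint $\tfrac{1}{n+1}$) is exactly the strategy in Guti\'errez's work; the Ruiz--Vega paper \cite{RV} already contains the key Morrey--Campanato estimate for the extension operator. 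Your identification of the genuine difficulty---balancing the ball radius $R$ against the shell thickness $2^{-k}$ so that the geometric series in $k$ converges uniformly in $R$ and $\epsilon$---is accurate, and your explanation of the dimensional dichotomy (the upper endpoint degenerating to $p=1$ when $n=3$) is correct. The sketch is sound; to turn it into a full proof you would need to make the summation in the near piece explicit, but no new idea is missing.
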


Once we have established the theorems that we use in our perturbative argument it remains to state the a priori estimate that we are going to consider. We have that, given precise conditions on the admissible singularity at the origin for the electromagnetic potential, and the decay at infinity as well, a Morrey-Campanato type estimate for the solution of the perturbed Helmholtz equation holds for $n\geq3$. This can be seen in \cite{Z}. 

We give now the main assumptions that must verify the potentials $V(x)$ and $A(x)$ in \cite{Z}. The electric potential will be decomposed as $V(x)=V_1(x)+V_2(x)$, where $V_1$ is a long range potential and $V_2$ is a short range one which might be singular.

Let $V_1(x)$, $A_j(x)$, $j=1,\dots,n$, $V_2(x)$ be real-valued functions, $r_0\geq1$ and $\mu>0$. For $n\geq3$, if $|x|>r_0$ we assume
\begin{equation}\label{eq:potentialinf}
\frac{|V_1(x)|}{|x|}+(\partial_r V_1(x))_{-} + |B_\tau(x)|+|V_2(x)|\leq\frac{c}{|x|^{1+\mu}},
\end{equation}
for some $c>0$, where $\partial_rV_1=\frac{x}{|x|}\cdot \nabla V_1$ is considered in the distributional sense and $(\partial_r V_1)_{-}$ denotes the negative part of $\partial_r V_1$. On the other hand, we require
\begin{equation}\label{eq:V1}
V_1(x)=(\partial_r V_1(x))_{-}=0\quad\text{if}\quad|x|\leq r_0,
\end{equation}
and
\begin{equation}\label{eq:V2}
|V_2(x)|\leq\frac{c}{|x|^{2-\alpha}}\quad\text{if}\quad|x|\leq r_0,\quad\alpha>0,
\end{equation}
for some $c>0$.

\noindent If $n>3$, we consider
\begin{equation}\label{eq:Bgret3}
|B|\leq\frac{C^*}{|x|^2}\quad|x|\leq r_0,
\end{equation}
for some $C^*>0$ small enough. Finally, in dimension $n=3$ we assume
\begin{equation}\label{eq:B3}
|B|\leq\frac{c}{|x|^{2-\alpha}}\quad|x|\leq r_0,\quad\alpha>0,
\end{equation}
for some $c>0$.

\begin{remark}
For simplicity, from now on we take $r_0=1$.
\end{remark}

\noindent Finally, let us define the following function 
\begin{equation}\label{eq:chi}
\chi(n)=
\begin{cases}
1\qquad\text{if}\quad n=3,\\
0\qquad\text{if}\quad n\neq3.
\end{cases}
\end{equation}
The theorem reads as follows (see \cite{Z}).

\begin{theorem}\label{thm:apriori}
Let $n\geq3$, $f$ such that $N(f)<\infty$ and $u\in H^1_A(\R^n)$ a solution of
\begin{equation}
	(\nabla + i A(x))^{2}u + V_1(x)u + V_2(x)u+(1\pm i\epsilon)u=f,\quad\epsilon\neq0,
\end{equation}
where $V_1$, $V_2$ and $A(x)$ satisfy assumptions \eqref{eq:potentialinf}-\eqref{eq:V2}, \eqref{eq:Bgret3} for $n>3$, and \eqref{eq:potentialinf}-\eqref{eq:V2}, \eqref{eq:B3} for $n=3$. Then, there exists a constant $C$, which depends uniformly in $\epsilon$, such that the following a priori estimate holds
\begin{align}\label{eq:estimpriori}
\triple{u}^{2}+\triple{\nabla_{A}u}^{2}+\int_{|x|\leq1}\frac{|\nabla_{A}u|^{2}}{|x|}dx&+\sup_{R>0}\frac{1}{R^{2}}\int_{|x|=R}|u|^{2}d\sigma_{R}(x)
\\
&
+(1-\chi(n))\int_{\R^{d}}\frac{|u|^{2}}{|x|^{3}}dx\leq CN(f)^{2}.
\nonumber
\end{align}	
\end{theorem}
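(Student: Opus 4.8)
The statement is a Morrey–Campanato a priori estimate for the electromagnetic Helmholtz equation, and the natural route is the multiplier method (Morawetz-type virial identities), which is the standard technique for such weighted resolvent estimates and is what one expects in the reference \cite{Z}. Let me sketch the plan.

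The plan is to test the equation against a family of well-chosen multipliers and extract positive quantities controlling the left-hand side. Writing the equation as $\Delta_A u + (1\pm i\epsilon)u + V_1 u + V_2 u = f$, I would first pair it in $L^2$ against multipliers of the form $\varphi(x)\, u$, $\psi(r)\,\partial_r^A u$ (the radial magnetic derivative $\frac{x}{|x|}\cdot\nabla_A u$), and $\phi(r)\,u$, for suitable radial weights $\varphi,\psi,\phi$ adapted to the Morrey–Campanato scaling. Taking real and imaginary parts and integrating by parts, the imaginary part will control the absorption term coming from $\pm i\epsilon$ (guaranteeing uniformity in $\epsilon$, since this term has a favorable sign), while the real part produces the bulk virial identity. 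The magnetic field $B$ enters precisely through the commutator of $\nabla_A$ with the radial vector field: the term $[\partial_r,\nabla_A]$ generates the tangential component $B_\tau$, which is why only $B_\tau$ (not all of $B$) appears in the decay hypothesis \eqref{eq:potentialinf}.

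The key steps, in order, are: (1) derive the virial identity generating $\triple{\nabla_A u}^2$, the radial derivative term $\int_{|x|\le1}|\nabla_A u|^2/|x|\,dx$, and the boundary sphere term $\sup_R R^{-2}\int_{|x|=R}|u|^2\,d\sigma_R$ as positive contributions; (2) for $n\ge4$, use a Hardy-type inequality to absorb the interior singular behavior, and for $n=3$ produce the extra term $\int |u|^2/|x|^3\,dx$ (this is exactly the role of $\chi(n)$, since the Hardy inequality degenerates in dimension three and must be replaced by a separate positive term); (3) show all the error terms are controlled. The error terms split by region: near infinity the long-range derivative $(\partial_r V_1)_-$ and the short-range decays $|V_1|/|x|$, $|B_\tau|$, $|V_2|\lesssim |x|^{-1-\mu}$ from \eqref{eq:potentialinf} let me absorb the electric and magnetic errors into the positive terms; near the origin the singular bounds \eqref{eq:V2} and \eqref{eq:B3}/\eqref{eq:Bgret3} combined with Hardy's inequality keep the singular contributions subcritical. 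Finally, the right-hand side is handled by the duality $\int f u\,dx \le \triple{u}\cdot N(f)$ and Young's inequality to absorb $\triple{u}$.

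The main obstacle will be step (2)–(3) at the origin, specifically controlling the magnetic field singularity. The bound \eqref{eq:Bgret3} requires the constant $C^*$ to be \emph{small}, which signals that the $|x|^{-2}$ magnetic singularity is exactly critical with respect to Hardy's inequality for $n>3$: the error it produces is of the same homogeneity as the positive Hardy term, so it can only be absorbed, not dominated, and smallness is unavoidable. In dimension $n=3$ the analogous Hardy constant vanishes, forcing both the strictly subcritical assumption \eqref{eq:B3} with $|x|^{-2+\alpha}$ and the compensating positive term $\int|u|^2/|x|^3\,dx$. Getting the bookkeeping of these borderline terms right, so that every error is strictly dominated (or absorbed with a small constant) by a positive term of matching scaling, is the delicate heart of the argument; once the multipliers and the region-by-region absorption are set up correctly, the remaining integrations by parts and the final duality estimate are routine.
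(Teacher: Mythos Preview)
The paper does not prove Theorem~\ref{thm:apriori}; it is quoted as an external input from \cite{Z}, with the additional local term $\int_{|x|\leq1}|\nabla_{A}u|^{2}/|x|\,dx$ supplied separately by \cite{Z1}, Lemma~2.4.1 (see the remarks following the theorem). So there is no in-paper argument to compare against. Your multiplier/Morawetz outline is the standard route and is almost certainly what \cite{Z} carries out.

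That said, your sketch contains a concrete error in the reading of $\chi(n)$. You write that for $n=3$ one must ``produce the extra term $\int |u|^2/|x|^3\,dx$'' because Hardy degenerates there. But $\chi(n)=1$ precisely when $n=3$, so the coefficient $(1-\chi(n))$ \emph{removes} that term in dimension three; the quantity $\int_{\R^n}|u|^2/|x|^3\,dx$ appears on the left-hand side only for $n\geq4$. This matches the remark after the theorem: the smallness constant in \eqref{eq:Bgret3} is tied to $\sqrt{(n-1)(n-3)}$, which vanishes at $n=3$, and that is why the three-dimensional case needs the strictly subcritical bound \eqref{eq:B3} and does \emph{not} receive the Hardy-type positive term. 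Since you flag step~(2) as ``the delicate heart of the argument,'' the inversion is not cosmetic: for $n\geq4$ the virial multiplier yields the positive term $\int|u|^2/|x|^3\,dx$ and the critical $|x|^{-2}$ magnetic error is absorbed using the small constant $C^*$; for $n=3$ no such positive term is available, and closure relies instead on the strict subcriticality $\alpha>0$ in \eqref{eq:B3}. With this correction your plan is the right one.
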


\begin{remark}
In order to prove the a priori estimate \eqref{eq:estimpriori}, in \cite{Z} is sufficient to impose conditions on the trapping component of the magnetic field $B_\tau$, not on the whole $B$. More precisely, they depend on the dimension. For $n>3$, $B_\tau$ must satisfy with $C^*<\sqrt{(n-1)(n-3)}$,
\begin{equation}
|B_\tau|\leq\frac{C^*}{|x|^2}\quad|x|\leq 1,
\end{equation}
and if $n=3$
\begin{equation}
|B_\tau|\leq \frac{c}{|x|^{2-\alpha}}\quad |x|\leq 1\quad c,\alpha >0.
\end{equation}
\end{remark}

\begin{remark}
The a priori estimates from Theorem 2.1 and Theorem 2.2 in \cite{Z} are a bit different from the one that we present in \eqref{eq:estimpriori}. In the left hand side it appears the term
\begin{equation*}
\int \frac{|\nabla^{\bot}_A u|^2}{|x|}dx,
\end{equation*}
where $\nabla^{\bot}_A$ denotes the tangential component of the magnetic gradient $\nabla_A$ defined in \eqref{eq:maggrad}. In principle we can not conclude anything from this for the term
\begin{equation*}
\int_{|x|\leq1}\frac{|\nabla_{A}u|^{2}}{|x|}dx.
\end{equation*}
The estimate that we need for this term appears in \cite{Z1}, Lemma 2.4.1.
\end{remark}

%

\begin{remark}In \cite{Z}, 
Theorem 1.3 is proved for all $\tau\geq\tau_{0}>0$ and $\epsilon\in(0,\epsilon_{1})$. We only deal with the particular case $\tau=1$. The case $\tau=0$ requires more decay on the potentials, as can be seen in \cite{F}.
\end{remark}

Once we have described all the tools which are going to be used, it is necessary to introduce the region where we are able to extend the known results for the free Helmholtz equation to the case when electromagnetic perturbations are considered. During the discussion, it will appear a subregion of $\Delta(n)$, for $n\geq3$, which will be denoted by $\Delta_{0}(n)$, given by
\begin{equation}
	\Delta_{0}(n)=\left\{\left(\frac{1}{p},\frac{1}{q}\right)\in\Delta(n):\frac{1}{n+1}\leq \frac{1}{p}-\frac{1}{2},\frac{1}{n+1}\leq \frac{1}{2}-\frac{1}{q}\right\}.
\end{equation}
The set $\Delta_{0}(n)$ is the solid triangle determined by the points $Q$, $Q'$ and $Q''$ (See Figure \ref{fig:deltan0}).

This will be the region of boundedness for the perturbed Helmholtz equation.

\begin{figure}[!htb]
\begin{center}
\includegraphics[]{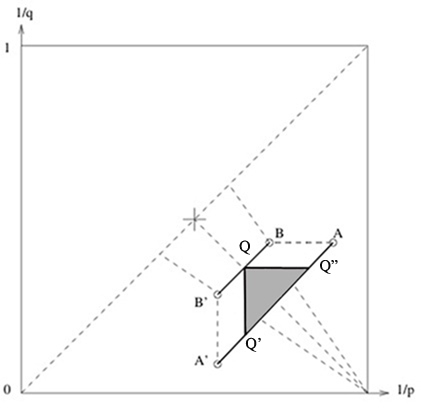}
\caption{$\Delta_{0}(n)$, $n\geq3$.}
\label{fig:deltan0}
\end{center}
\end{figure}

\begin{remark}
For the case of the perturbed electromagnetic equation, we are not able to obtain a positive result of boundedness for the whole region $\Delta(n)$, since we have not control for the gradient term, namely $A\cdot\nabla$, outside $\Delta_{0}(n)$. However, if we consider $A\equiv 0$, we can probe the estimates in the whole $\Delta(n)$ by imposing more decay on $V$ outside $\Delta_0(n)$.
\end{remark}


\section{Electromagnetic Helmholtz Equation}\label{sec:magnetic}


In this section we give the precise statement and the proof of the theorem, where we extend the $L^{p}$-$L^{q}$ estimates for the solution of the electromagnetic Helmholtz equation. The theorems and the notations which will be used along the proof were presented in Section \ref{sec:intro}. We give first the result for the Helmholtz equation with electromagnetic potential and afterwards, by setting $A\equiv 0$ we extend the result to the electric case. 

Let us start by considering the Helmholtz equation with electromagnetic potential in dimensions $n\geq3$. It reads,

\begin{equation}\label{eq:Helmmagnetic}
	(\nabla-iA(x))^2u+u+V(x)u=f\quad \text{in}\quad \mathbb{R}^n,
\end{equation}
where $A:(A^1,\dots, A^n):\mathbb{R}^n\rightarrow\mathbb{R}^n$ is the magnetic potential and $V(x):\mathbb{R}^n\rightarrow\mathbb{R}$ is the electric potential.

We will also assume that the magnetic potential $A$ is divergence free, or in other words, $A$ satisfies the so called Coulomb gauge condition
\begin{equation}
 \nabla\cdot A=0.
\end{equation}
We will prove $L^{p}$-$L^{q}$ estimates for the solution of the equation \eqref{eq:Helmmagnetic}.
In order to do that we will consider the solution of \eqref{eq:Helmmagnetic} as the solution of the modified Helmholtz electromagnetic equation, 
\begin{equation}\label{eq:helma}
	(\nabla-iA(x))^2u+(1\pm i\epsilon)u+V(x)u=f\quad \text{in}\quad \mathbb{R}^n,\quad\epsilon\neq0,
\end{equation}
via the limiting absorption principle, by taking the limit of the solution of \eqref{eq:helma} when $\epsilon$ goes to 0. We will obtain the corresponding $L^{p}$-$L^{q}$ estimates, independent of $\epsilon$, for the solution of \eqref{eq:helma}, so these will remain true for the solution of \eqref{eq:Helmmagnetic}. This procedure is justified by the results appearing in \cite{Z}.

The goal is to determine the region of $p$ and $q$ where the solution of \eqref{eq:helma} satisfies $L^{p}$-$L^{q}$ estimates, namely, 
\begin{equation}
	\|u\|_{L^q(\mathbb{R}^n)}\leq C \|f\|_{L^p(\mathbb{R}^n)}.
\end{equation}
with $C$ independent of $\epsilon$.


We will impose the following conditions on the potentials.


\begin{asumption} Let us assume that the electric potential $V$ is such that
\begin{equation}
V(x)=V_2(x),
\end{equation}
 that is, $V$ satisfies the assumptions of Theorem \ref{thm:apriori},
\begin{equation}
	V(x)=V_1(x)+V_2(x),
\end{equation}
with $V_1\equiv0$.

In order to use \eqref{eq:estimpriori}, the magnetic potential $A$ and the electric potential $V$ must satisfy the assumptions \eqref{eq:potentialinf}-\eqref{eq:V2} and \eqref{eq:Bgret3} for $n\geq4$ and \eqref{eq:potentialinf}-\eqref{eq:V2} and \eqref{eq:B3} when $n=3$.

Moreover, $A$ and $V$ must verify the following conditions on the admissible singularity at the origin and the decay at infinity. They read as
\begin{equation}\label{eq:H1}
{\bf{(H1)}}
\begin{cases}
|A(x)|\leq\frac{C}{|x|^{\beta}},\quad\beta<\frac{n+3}{2(n+1)},\quad|x|\leq1,\\
|V(x)|\leq\frac{C}{|x|^{\gamma}},\quad\gamma<\frac{3n+5}{2(n+1)},\quad|x|\leq1,\\
|A(x)|\leq\frac{C}{|x|^{1+\mu}},\quad\mu>0,\quad|x|>1,\\
|V(x)|\leq\frac{C}{|x|^{1+\mu}},\quad\mu>0,\quad|x|>1.
\end{cases}
\end{equation}


\begin{remark}\label{rm:smallness}
The constant $C$ that appears in ${\bf(H1)}$ and which measures the smallness near the origin of the potentials, does not need to be small as in Theorem \ref{thm:apriori}, since that we are far away from the critical admissible singularity for the magnetic potential $A$ and the electric potential $V$ given by the conditions \eqref{eq:V2} and \eqref{eq:Bgret3} for $n\geq4$ and \eqref{eq:V2} and \eqref{eq:B3} if $n=3$.
\end{remark}

\end{asumption}

The main result of the paper is the following. This is the natural generalization of the Theorem 2.1 in \cite{G} to the case of singular potentials.

\begin{theorem}
\label{thm:elecmag}
Let $u \in H^1_A(\R^n)$ be a solution of 
\begin{equation}\label{eq:Helmmag}
	(\nabla-iA(x))^2u+(1\pm i\epsilon)u+V(x)u=f\quad \text{in}\quad \mathbb{R}^n,\quad n\geq3,\quad\epsilon\neq0.
\end{equation}
where $V$ and $A$ satisfy assumptions \eqref{eq:potentialinf}-\eqref{eq:V2}, \eqref{eq:Bgret3} and $({\bf H1})$ for $n=3$, and \eqref{eq:potentialinf}-\eqref{eq:V2}, \eqref{eq:B3} and $({\bf H1})$  for $n\geq4$. Then, there exists a constant C, independent of $\epsilon$, such that 
	\begin{equation}
		\|u\|_{L^q(\mathbb{R}^n)}\leq C \|f\|_{L^p(\mathbb{R}^n)},
	\end{equation}
	when $\left(\frac{1}{p},\frac{1}{q}\right)\in\Delta_{0}(n)$.
\end{theorem}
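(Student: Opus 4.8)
The plan is to remove the potential by recasting \eqref{eq:Helmmag} as a free Helmholtz equation with a perturbed right-hand side. Using the Coulomb gauge $\nabla\cdot A=0$ to expand $(\nabla-iA)^2=\Delta-2iA\cdot\nabla-|A|^2$, the equation becomes
\begin{equation*}
\Delta u+(1\pm i\epsilon)u=-F,\qquad F:=-f-2iA\cdot\nabla u-|A|^2u+Vu.
\end{equation*}
Applying the free $L^p$-$L^q$ bound of Theorem \ref{thm:Helmfree} to this identity gives $\|u\|_{L^q}\le C\|F\|_{L^p}$, so that everything reduces to estimating the three perturbative contributions $A\cdot\nabla u$, $|A|^2u$ and $Vu$ in $L^p$ by quantities we can control. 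Writing $\nabla u=\nabla_A u+iAu$ replaces $A\cdot\nabla u$ by $A\cdot\nabla_A u+i|A|^2u$, so it suffices to bound $\|A\cdot\nabla_A u\|_{L^p}$, $\||A|^2u\|_{L^p}$ and $\|Vu\|_{L^p}$; this is the natural move because $\nabla_A u$, and not $\nabla u$, is the object controlled by the a priori estimate.

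For each term I would split $\R^n$ into the unit ball and its complement and apply Hölder's inequality. Near the origin I use the singular bounds of $({\bf H1})$, $|A|\le C|x|^{-\beta}$ and $|V|\le C|x|^{-\gamma}$, factoring for instance $A\cdot\nabla_A u=(|x|^{1/2}A)\cdot(|x|^{-1/2}\nabla_A u)$ so as to pull out the weighted gradient $\int_{|x|\le1}|x|^{-1}|\nabla_A u|^2\,dx$ and, for the zeroth-order terms, $\triple{u}$ together with the boundary and radial quantities appearing in \eqref{eq:estimpriori}; the remaining potential factors are finite exactly when $\beta<\frac{n+3}{2(n+1)}$ and $\gamma<\frac{3n+5}{2(n+1)}$ for $(1/p,1/q)$ in the stated range. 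Away from the origin I use the short-range decay $|A|,|V|\le C|x|^{-1-\mu}$ and sum the resulting Hölder estimates over the dyadic annuli $C(j)$, the factor $\mu>0$ providing summability; here the surviving norms are $\triple{\nabla_A u}$ and $\triple{u}$. Tracking the admissible Hölder exponents in both regions is what forces the pair $(1/p,1/q)$ into the smaller triangle $\Delta_0(n)$ rather than all of $\Delta(n)$.

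It then remains to dominate the Morrey--Campanato and weighted-gradient norms that have been produced. The $L^2$-local estimate of Theorem \ref{thm:triple} controls $\triple{u}$ by $\|F\|_{L^p}$, while the a priori estimate \eqref{eq:estimpriori} of Theorem \ref{thm:apriori}, valid under \eqref{eq:potentialinf}--\eqref{eq:V2} together with \eqref{eq:Bgret3} or \eqref{eq:B3}, controls $\triple{\nabla_A u}$, the quantity $\int_{|x|\le1}|x|^{-1}|\nabla_A u|^2\,dx$, the radial term and, for $n\ne3$, $\int|x|^{-3}|u|^2\,dx$. Feeding these bounds back into $\|F\|_{L^p}$ and collecting terms closes the estimate for $\|u\|_{L^q}$; the resulting constant is independent of $\epsilon$, so the limiting absorption results of \cite{Z} let $\epsilon\to0$ and transfer the bound to \eqref{eq:Helmmagnetic}. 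I emphasize that, unlike a plain Neumann-series perturbation, no smallness of the potentials is required: the coercivity needed to handle the perturbative terms is supplied by the multiplier-type a priori estimate of \cite{Z}, which is precisely where the sign and non-trapping conditions on $\partial_r V_1$ and $B_\tau$ enter.

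The principal obstacle is the first-order term $A\cdot\nabla u$. Since there is no $L^p$-$L^q$ estimate for the gradient of a free Helmholtz solution, this term cannot be treated by the free theory alone and must be controlled through the weighted gradient bound coming from \eqref{eq:estimpriori}. This is exactly what shrinks the region of boundedness to $\Delta_0(n)$ and what makes the Coulomb gauge and the singularity thresholds on $\beta$ (and, through $B$, on the magnetic field near the origin) indispensable; arranging the Hölder exponents for this single term to fit inside $\Delta_0(n)$ is the delicate point of the whole argument.
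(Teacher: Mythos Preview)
Your outline has the right ingredients but a genuine gap in how it closes. You propose to control the perturbative terms $A\cdot\nabla_A u$, $|A|^2u$, $Vu$ by the weighted quantities on the left of \eqref{eq:estimpriori}, and then invoke Theorem~\ref{thm:apriori} to bound those. The problem is that Theorem~\ref{thm:apriori} delivers $N(f)$ on the right-hand side, not $\|f\|_{L^p}$, and for $p<2$ there is no inequality $N(f)\lesssim\|f\|_{L^p}$ (a function in $L^p\setminus L^2_{\text{loc}}$ supported in a single annulus already breaks it). So after ``feeding back'' you are left with $\|u\|_{L^q}\le C\|f\|_{L^p}+C\,N(f)$, which is not the claimed estimate. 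Applying Theorem~\ref{thm:triple} to the free equation with right-hand side $F$ does not help either, since $F$ contains $\nabla_A u$ and you are back to the same circularity.

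The paper closes this gap with two additional steps that your sketch omits. First, it proves $\|u\|_{L^q}\le C\,N(f)$ (using the dual of Theorem~\ref{thm:triple} together with Theorem~\ref{thm:apriori}) and then \emph{dualizes the perturbed estimate} to obtain $\triple{u}\le C\|f\|_{L^p}$ for the electromagnetic equation itself; this is the bridge from the $N$-world to the $L^p$-world. Second, in the final step it does \emph{not} reuse Theorem~\ref{thm:apriori} for the gradient terms. Instead it multiplies \eqref{eq:Helmmag} by $\varphi\bar u$ for carefully chosen radial cut-offs $\varphi_R$ and integrates, producing an energy identity in which the weighted gradient and surface terms are bounded by $\bigl|\int\varphi_R f\bar u\bigr|+C\triple{u}^2$. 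H\"older on the first term gives $\|f\|_{L^p}\|u\|_{L^q}\|\varphi_R\|_{L^r}$, and checking $\|\varphi_R\|_{L^r}<\infty$ is exactly what pins down $\Delta_0(n)$ and the thresholds on $\beta,\gamma$. Combining with the dualized estimate for $\triple{u}$ and absorbing the $\|u\|_{L^q}$ term yields the result. Your plan needs both the duality step and this multiplier identity; Theorem~\ref{thm:apriori} alone cannot furnish an $L^p$ right-hand side.
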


\begin{proof}


\noindent{\bf Step 1.} We will prove that, whenever $\frac{1}{n+1}\leq\frac{1}{2}-\frac{1}{q}$, then we get  
\begin{equation}
  \|u\|_{L^q(\mathbb{R}^n)}\leq C N(f).
\end{equation}
Let $u$ be a solution of \eqref{eq:Helmmag}. Since $\nabla\cdot A\equiv 0$, we have that $(\nabla-iA)^2$ can be written as
\begin{equation}
	(\nabla-iA)^2u=\Delta u-2iA\cdot\nabla_{A}u+|A|^2u.
\end{equation}
Notice that, the electromagnetic hamiltonian can be considered as a perturbation of the free hamiltonian. Then, we have that $u$ is solution of the following equation
\begin{equation}
	\Delta u+(1\pm i\epsilon)u=f+2iA\cdot\nabla_{A}u-|A|^2u-Vu.
\end{equation}
Now we use the result from Theorem \ref{thm:triple}. If we consider the dual estimate of \eqref{eq:tripletrun}, we obtain that, if $\frac{1}{n+1}\leq\frac{1}{2}-\frac{1}{q}$ it holds
\begin{align}\label{eq:desNmag}
	\|u\|_{L^q(\mathbb{R}^n)}&=\|(\Delta+(1\pm i\epsilon))^{-1}(f+2iA\cdot\nabla_{A}u-|A|^2u-Vu)\|_{L^q(\mathbb{R}^n)}\\
	&\leq C(N(f)+N(2iA\cdot\nabla_{A}u)+N(|A|^2u)+N(Vu)).
	\nonumber
\end{align}
with $C$ independent of $\epsilon$ and $N$ defined in \eqref{eq:N}.

We are going to estimate the terms that appear on the RHS of \eqref{eq:desNmag}.
First we deal with the term $N(2iA\cdot\nabla_{A}u)$. We can split this term in two parts,
\begin{align}\label{eq:gradsplit}
	N(2iA\cdot\nabla_{A}u)^{2}&=C\sum_{j\in\Z}2^{j+1}\int_{C(j)}|A\cdot\nabla_{A}u|^{2}dx
	\\
	&=C\sum_{j\leq0}2^{j+1}\int_{C(j)}|A\cdot\nabla_{A}u|^{2}dx
	\nonumber
	\\
	&+C\sum_{j\geq0}2^{j+1}\int_{C(j)}|A\cdot\nabla_{A}u|^{2}dx.
	\nonumber
\end{align}
Let us deal both terms separately. 

\noindent First, consider $\sum_{j\geq0}2^{j+1}\int_{C(j)}|A\cdot\nabla_{A}u|^{2}dx$. Since $|A(x)|\leq C/|x|^{1+\mu}$, $\mu>0$, $|x|\geq1$, we have that this term can be bounded as
\begin{align*}
	\sum_{j\geq0}2^{j+1}\int_{C(j)}|A\cdot\nabla_{A}u|^{2}dx&\leq C\sum_{j\geq0}2^{j}\int_{C(j)}|A|^2|\nabla_{A}u|^{2}dx
	\\
	&
	\leq C\sum_{j\geq0}2^{-j}2^{-2j\mu}\int_{C(j)}|\nabla_{A}u|^{2}dx
	\nonumber
	\\
	&
	\leq C\left(\sup_{R\geq1}\frac{1}{R}\int_{1\leq|x|\leq R}|\nabla_{A}u|^{2}dx\right)\sum_{j\geq0}2^{-2j\mu}
	\nonumber
	\\
	&
	\leq C\sup_{R\geq1}\frac{1}{R}\int_{1\leq|x|\leq R}|\nabla_{A}u|^{2}dx.
	\nonumber
\end{align*}
Now, we can treat the part $\sum_{j\leq0}2^{j+1}\int_{C(j)}|A\cdot\nabla_{A}u|^{2}dx$. Let us consider $\alpha$ positive that will be fixed below, $\alpha<2/(n+1)$ (see \eqref{eq:condalpha}). 
Since $|A(x)|\leq C/|x|^\beta$, $|x|\leq1$, we have that the following holds,
\begin{align*}
\sum_{j\leq0}2^{j+1}\int_{C(j)}|A\cdot\nabla_{A}u|^{2}dx&\leq C\sum_{j\leq0}2^{j}\int_{C(j)}|A|^2|\nabla_{A}u|^{2}dx
\\
&
\leq C\sum_{j\leq0}2^j\int_{C(j)}\frac{|\nabla_{A}u|^2}{|x|^{2\beta}}dx
\nonumber
\\
&
\leq C\sum_{j\leq0}\int_{C(j)}\frac{|\nabla_{A}u|^2}{|x|^{\alpha}}|x|^{\alpha+1-2\beta}dx
\nonumber
\\
&
\leq C\int_{|x|\leq1}\frac{|\nabla_{A}u|^2}{|x|^\alpha}dx,
\nonumber
\end{align*}
provided $\alpha+1-2\beta\geq0$. This condition, together with the previous one, $\alpha<2/(n+1)$, leads us to the admissible singularity for $A$.
\begin{equation*}
2\beta-1\leq\alpha<\frac{2}{n+1}\iff\beta<\frac{n+3}{2(n+1)}.
\end{equation*}
Therefore we obtain from the separate analysis that
\begin{equation}\label{eq:Ngrad}
N(2iA\cdot\nabla_{A}u)^{2}\leq C\left(\int_{|x|\leq1}\frac{|\nabla_{A}u|^2}{|x|^\alpha}dx+\sup_{R\geq1}\frac{1}{R}\int_{1\leq|x|\leq R}|\nabla_{A}u|^{2}dx\right).
\end{equation}
Let us continue with the term $N(|A|^2u)$. As before, we can say that
\begin{align}\label{eq:A2split}
	N(|A|^2u)^{2}&=\sum_{j\in\Z}2^{j+1}\int_{C(j)}||A|^2u|^{2}dx
	\\
	&=\sum_{j\leq0}2^{j+1}\int_{C(j)}||A|^2u|^{2}dx
	\nonumber
	\\
	&+\sum_{j\geq0}2^{j+1}\int_{C(j)}||A|^2u|^{2}dx.
	\nonumber
\end{align}
The term away the origin can be treated as above, and we get
\begin{align}\label{eq:A2>}
	\sum_{j\geq0}2^{j+1}\int_{C(j)}||A|^{2}u|^{2}dx&\leq C\sum_{j\geq0}2^{j}\int_{C(j)}|A|^{4}|u|^{2}dx
	\\
	&
	\leq C\sum_{j\leq0}2^{-j}2^{-2j(1+2\mu)}\int_{C(j)}|u|^{2}dx
	\nonumber
	\\
	&
	\leq C\left(\sup_{R\geq1}\frac{1}{R}\int_{1\leq|x|\leq R}|u|^{2}dx\right)\sum_{j\geq0}2^{-2j(1+2\mu)}
	\nonumber
	\\
	&
	\leq C\sup_{R\geq1}\frac{1}{R}\int_{1\leq|x|\leq R}|u|^{2}dx.
	\nonumber
\end{align}
Let us now proceed with the part concerning to the singularity at the origin for the magnetic potential $A$. The analysis of this term will be different depending on the dimension.
We start by considering the case $n=3$. Again, since $|A(x)|\leq C/|x|^\beta$, $|x|\leq1$, it holds
\begin{align*}
	\sum_{j\leq0}2^{j+1}\int_{C(j)}||A|^2u|^{2}dx&\leq C\sum_{j\leq0}2^{j}\int_{C(j)}|A|^4|u|^{2}dx
	\nonumber
	\\
	&\leq C\sum_{j\leq0}2^{j(\alpha+3)}2^{-j(\alpha+2)}\int_{C(j)}\frac{|u|^{2}}{|x|^{4\beta}}dx
	\nonumber
	\\
	&
	=C\sum_{j\leq0}2^{j(\alpha+3-4\beta)}\frac{1}{2^{j(\alpha+2)}}\int_{C(j)}|u|^{2}dx
	\nonumber
	\\
	&
	\leq C\left(\sup_{R\leq1}\frac{1}{R^{\alpha+1}}\int_{|x|=R}|u|^2d\sigma_{R}(x)\right)\sum_{j\leq0}2^{j(\alpha+3-4\beta)}
	\nonumber
	\\
	&
	\leq C\sup_{R\leq1}\frac{1}{R^{\alpha+1}}\int_{|x|=R}|u|^2d\sigma_{R}(x),
\end{align*}
if $\alpha+3-4\beta>0$. From this , together with $\alpha<2/(n+1)$, we can conclude
\begin{equation*}
4\beta-3<\alpha<\frac{2}{n+1}\iff\beta<\frac{3n+5}{4(n+1)}.
\end{equation*}
We have that
\begin{equation*}
\frac{n+3}{2(n+1)}<\frac{3n+5}{4(n+1)},\quad\forall n>1.
\end{equation*}
Hence, if $|A(x)|\leq C/|x|^\beta$, $\beta<(n+3)/2(n+1)$, $|x|\leq1$, we get if $n=3$,
\begin{equation}\label{eq:A2<3}
\sum_{j\leq0}2^{j+1}\int_{C(j)}||A|^2u|^{2}dx\leq C\sup_{R\leq1}\frac{1}{R^{\alpha+1}}\int_{|x|=R}|u|^2d\sigma_{R}(x).
\end{equation}
Now, we treat the case $n\geq4$. We have that
\begin{align*}
\sum_{j\leq0}2^{j+1}\int_{C(j)}||A|^2u|^{2}dx&\leq C\sum_{j\leq0}2^{j}\int_{C(j)}|A|^4|u|^{2}dx
	\nonumber
	\\
	&
	\leq C\int_{|x|\leq1}\frac{|u|^{2}}{|x|^{\alpha+2}}|x|^{\alpha+3-4\beta}dx
	\nonumber
	\\
	&
	\leq C\int_{|x|\leq1}\frac{|u|^{2}}{|x|^{\alpha+2}}dx,
\end{align*}
provided $\alpha+3-4\beta\geq0$. As before, if $\beta<(n+3)/2(n+1)$, we can bound this term as
\begin{equation}\label{eq:A2<4}
\sum_{j\leq0}2^{j+1}\int_{C(j)}||A|^2u|^{2}dx\leq C\int_{|x|\leq1}\frac{|u|^{2}}{|x|^{\alpha+2}}dx.
\end{equation}
From \eqref{eq:A2split}, \eqref{eq:A2>} and \eqref{eq:A2<3}, we get for $n=3$
\begin{equation}\label{eq:NA23}
N(|A|^2u)^{2}\leq C\left(\sup_{R\leq1}\frac{1}{R^{\alpha+1}}\int_{|x|=R}|u|^2d\sigma_{R}(x)+\sup_{R\geq1}\frac{1}{R}\int_{1\leq|x|\leq R}|u|^{2}dx\right),
\end{equation}
and for $n\geq4$, from \eqref{eq:A2split}, \eqref{eq:A2>} and \eqref{eq:A2<4}, it holds
\begin{equation}\label{eq:NA24}
N(|A|^2u)^{2}\leq C\left(\int_{|x|\leq1}\frac{|u|^{2}}{|x|^{\alpha+2}}dx+\sup_{R\geq1}\frac{1}{R}\int_{1\leq|x|\leq R}|u|^{2}dx\right).
\end{equation}
Now, it remains to consider the term concerning the electric potential $V(x)$. We proceed as above by splitting the term in two parts
\begin{align}\label{eq:Vsplit}
	N(Vu)^{2}&=\sum_{j\in\Z}2^{j+1}\int_{C(j)}|Vu|^{2}dx
	\\
	&=\sum_{j\leq0}2^{j+1}\int_{C(j)}|Vu|^{2}dx
	\nonumber
	\\
	&+\sum_{j\geq0}2^{j+1}\int_{C(j)}|Vu|^{2}dx.
	\nonumber
\end{align}
Due to $V(x)\leq C/|x|^{1+\mu}$, $\mu>0$, $|x|\geq1$, the second part can be upper bounded as 
\begin{align}\label{eq:V>}
	\sum_{j\geq0}2^{j+1}\int_{C(j)}|Vu|^{2}dx&\leq C\sum_{j\geq0}2^{j}\int_{C(j)}|V|^{2}|u|^{2}dx
	\\
	&
	\leq C\sum_{j\leq0}2^{-j}2^{-2j\mu}\int_{C(j)}|u|^{2}dx
	\nonumber
	\\
	&
	\leq C\left(\sup_{R\geq1}\frac{1}{R}\int_{1\leq|x|\leq R}|u|^{2}dx\right)\sum_{j\geq0}2^{-2j\mu}
	\nonumber
	\\
	&
	\leq C\sup_{R\geq1}\frac{1}{R}\int_{1\leq|x|\leq R}|u|^{2}dx.
	\nonumber
\end{align}
The term near the origin must be treated as the magnetic potential. We consider separately the cases $n=3$ and $n\geq4$. For the $n=3$, due to the singularity assumption, namely $|V(x)|\leq C/|x|^{\gamma}$, $|x|\leq1$, we get
\begin{align*}
	\sum_{j\leq0}2^{j+1}\int_{C(j)}|Vu|^{2}dx&\leq C\sum_{j\leq0}2^{j}\int_{C(j)}|V|^{2}|u|^{2}dx
	\nonumber
	\\
	&\leq C\sum_{j\leq0}2^{j(\alpha+3)}2^{-j(\alpha+2)}\int_{C(j)}\frac{|u|^{2}}{|x|^{2\gamma}}dx
	\nonumber
	\\
	&
	=C\sum_{j\leq0}2^{j(\alpha+3-2\gamma)}\frac{1}{2^{j(\alpha+2)}}\int_{C(j)}|u|^{2}dx
	\nonumber
	\\
	&
	\leq C\left(\sup_{R\leq1}\frac{1}{R^{\alpha+1}}\int_{|x|=R}|u|^2d\sigma_{R}(x)\right)\sum_{j\leq0}2^{j(\alpha+3-2\gamma)}
	\nonumber
	\\
	&
	\leq C\sup_{R\leq1}\frac{1}{R^{\alpha+1}}\int_{|x|=R}|u|^2d\sigma_{R}(x),
\end{align*}
whenever $\alpha+3-2\gamma>0$.
This, together with $\alpha<2/(n+1)$ leads us to
\begin{equation*}
2\gamma-3<\alpha<\frac{2}{n+1}\iff\gamma<\frac{3n+5}{2(n+1)}.
\end{equation*}
Hence, we can bound this term as 
\begin{equation}\label{eq:V<3}
\sum_{j\leq0}2^{j+1}\int_{C(j)}|Vu|^{2}dx\leq C\sup_{R\leq1}\frac{1}{R^{\alpha+1}}\int_{|x|=R}|u|^2d\sigma_{R}(x).
\end{equation}
When $n\geq4$ we have 
\begin{align*}
\sum_{j\leq0}2^{j+1}\int_{C(j)}|Vu|^{2}dx&\leq C\sum_{j\leq0}2^{j}\int_{C(j)}|V|^{2}|u|^{2}dx
	\nonumber
	\\
	&
	\leq C\int_{|x|\leq1}\frac{|u|^{2}}{|x|^{\alpha+2}}|x|^{\alpha+3-2\gamma}dx
	\nonumber
	\\
	&
	\leq C\int_{|x|\leq1}\frac{|u|^{2}}{|x|^{\alpha+2}}dx,
\end{align*}
provided $\alpha+3-2\gamma\geq0$. As above we obtain
\begin{equation}\label{eq:V<4}
\sum_{j\leq0}2^{j+1}\int_{C(j)}|Vu|^{2}dx\leq C\int_{|x|\leq1}\frac{|u|^{2}}{|x|^{\alpha+2}}dx.
\end{equation}
Therefore, from \eqref{eq:Vsplit}, \eqref{eq:V>} and \eqref{eq:V<3}, we get for $n=3$
\begin{equation}\label{eq:NV3}
N(Vu)^{2}\leq C\left(\sup_{R\leq1}\frac{1}{R^{\alpha+1}}\int_{|x|=R}|u|^2d\sigma_{R}(x)+\sup_{R\geq1}\frac{1}{R}\int_{1\leq|x|\leq R}|u|^{2}dx\right), 
\end{equation}
and for $n\geq4$, from  \eqref{eq:Vsplit}, \eqref{eq:V>} and \eqref{eq:V<4}, it holds
\begin{equation}\label{eq:NV4}
N(Vu)^{2}\leq C\left(\int_{|x|\leq1}\frac{|u|^{2}}{|x|^{\alpha+2}}dx+\sup_{R\geq1}\frac{1}{R}\int_{1\leq|x|\leq R}|u|^{2}dx\right).
\end{equation}
From \eqref{eq:desNmag}, \eqref{eq:Ngrad}, \eqref{eq:NA23} and \eqref{eq:NV3}, for $n=3$ and from \eqref{eq:desNmag}, \eqref{eq:Ngrad}, \eqref{eq:NA24} and \eqref{eq:NV4} for $n\geq4$, we get that whenever $\frac{1}{n+1}\leq\frac{1}{2}-\frac{1}{q}$, we can estimate as follows
\begin{align}\label{eq:Nf}
	&\|u\|_{L^q(\mathbb{R}^n)}=\|(\Delta+(1\pm i\epsilon))^{-1}(f+2iA\cdot\nabla_{A}u-|A|^2u-Vu)\|_{L^q(\mathbb{R}^n)}\\
	&\leq C(N(f)+N(2iA\cdot\nabla_{A}u)+N(|A|^2u)+N(Vu))
	\nonumber
	\\
	&\leq CN(f)+C_{1}\left\{\left(\int_{|x|\leq1}\frac{|\nabla_{A}u|^2}{|x|^\alpha}dx\right)^{1/2}+\left(\sup_{R\geq1}\frac{1}{R}\int_{1\leq|x|\leq R}|\nabla_{A}u|^{2}dx\right)^{1/2}
	\nonumber
	\right.
	\\
	&
	\left.+\chi(n)\left(\sup_{R\leq1}\frac{1}{R^{\alpha+1}}\int_{|x|=R}|u|^2d\sigma_{R}(x)\right)^{1/2}+\left(\sup_{R\geq1}\frac{1}{R}\int_{1\leq|x|\leq R}|u|^{2}dx\right)^{1/2}
	\nonumber
	\right.
	\\
	&
	\left.+(1-\chi(n))\left(\int_{|x|\leq1}\frac{|u|^{2}}{|x|^{\alpha+2}}dx\right)^{1/2}\right\},
	\nonumber
\end{align}
where $\chi(n)$ was defined in \eqref{eq:chi}.

Now we remind the a priori estimate given by Theorem \ref{thm:apriori}, which ensures that, under the assumptions  \eqref{eq:potentialinf}-\eqref{eq:V2}, \eqref{eq:Bgret3} for $n>3$, and \eqref{eq:potentialinf}-\eqref{eq:V2}, \eqref{eq:B3} for $n=3$, there exists a constant $C$, such that the following holds
\begin{align}\label{eq:f}
\triple{u}^{2}+\triple{\nabla_{A}u}^{2}+\int_{|x|\leq1}\frac{|\nabla_{A}u|^{2}}{|x|}dx&+\sup_{R>0}\frac{1}{R^{2}}\int_{|x|=R}|u|^{2}d\sigma_{R}(x)
\\
&
+(1-\chi(n))\int_{\R^{d}}\frac{|u|^{2}}{|x|^{3}}dx\leq CN(f)^{2},
\nonumber
\end{align}
where $\chi(n)$ was defined in \eqref{eq:chi}.


\begin{remark}The constant $C$ in \eqref{eq:f} depends uniformly in $\epsilon$.
\end{remark}

\begin{remark}\label{rm:alpha}
Since $\alpha<1$, the terms containing $\alpha$ in \eqref{eq:Nf} are always smaller than the corresponding ones in \eqref{eq:f}.
\end{remark}
Then, from \eqref{eq:Nf}, \eqref{eq:f} and Remark \ref{rm:alpha}, we get
\begin{equation}\label{eq:Nff}
\|u\|_{L^q(\mathbb{R}^n)}\leq CN(f).
\end{equation}
Consequently, we get the desired estimate.

\noindent{\bf Step 2.} By applying duality to the last estimate, we get that if $\frac{1}{n+1}\leq\frac{1}{p}-\frac{1}{2}$,
\begin{equation}\label{eq:step2}
	\triple{u}\leq C \|f\|_{{L^p(\mathbb{R}^n)}}
\end{equation}
\begin{remark}The adjoint operator is the one corresponding to $\mp \epsilon$. Since we can do the same argument for both signs, all the computations are justified.
\end{remark}

\noindent{\bf Step 3.} This is the final step in the proof. As we said in the introduction the main difficulty will be to handle the first order term given by $A\cdot\nabla_{A}u$, since there are no $L^p$-$L^q$ estimates for the gradient of the solution of the free Helmholtz equation. 
In the analysis we will consider $\triple{\nabla_A u}$, being this norm controlled in $\Delta_0(n)$.

We have that, if $\left(\frac{1}{p},\frac{1}{q}\right)\in\Delta(n)$ such that $\frac{1}{n+1}\leq\frac{1}{2}-\frac{1}{q}$, from the $L^{p}$-$L^{q}$ estimates for the solution of the free equation, namely \eqref{eq:estfree}, given in Theorem \ref{thm:Helmfree}, and proceeding as in the Step 1 for the terms $2iA\cdot\nabla_{A}u$, $|A|^2u$ and $Vu$, we get
\begin{align*}
	\|u\|_{L^q(\mathbb{R}^n)}&=\|(\Delta+(1\pm i\epsilon))^{-1}(f+2iA\cdot\nabla_{A}u-|A|^2u+Vu)\|_{L^q(\mathbb{R}^n)}\\
	&\leq C\|f\|_{L^p(\mathbb{R}^n)}+C_{1}(N(2iA\cdot\nabla_{A}u)+N(|A|^2u)+N(Vu)),
	\nonumber
\end{align*}
where $C$ and $C_{1}$ do not depend on $\epsilon$.

Let us remind that, for the term $N(2iA\cdot\nabla_{A}u)$, it holds for $n\geq3$,
\begin{equation*}
N(2iA\cdot\nabla_{A}u)^{2}\leq C\left(\int_{|x|\leq1}\frac{|\nabla_{A}u|^2}{|x|^\alpha}dx+\sup_{R\geq1}\frac{1}{R}\int_{1\leq|x|\leq R}|\nabla_{A}u|^{2}dx\right).
\end{equation*}
Concerning the term $N(|A|^2u)$, we have for $n=3$
\begin{equation*}
N(|A|^2u)^{2}\leq C\left(\sup_{R\leq1}\frac{1}{R^{\alpha+1}}\int_{|x|=R}|u|^2d\sigma_{R}(x)+\sup_{R\geq1}\frac{1}{R}\int_{1\leq|x|\leq R}|u|^{2}dx\right),
\end{equation*}
and for $n\geq4$,
\begin{equation*}
N(|A|^2u)^{2}\leq C\left(\int_{|x|\leq1}\frac{|u|^{2}}{|x|^{\alpha+2}}dx+\sup_{R\geq1}\frac{1}{R}\int_{1\leq|x|\leq R}|u|^{2}dx\right).
\end{equation*}
Finally, for the term $N(Vu)$, we get for $n=3$
\begin{equation*}
N(Vu)^{2}\leq C\left(\sup_{R\leq1}\frac{1}{R^{\alpha+1}}\int_{|x|=R}|u|^2d\sigma_{R}(x)+\sup_{R\geq1}\frac{1}{R}\int_{1\leq|x|\leq R}|u|^{2}dx\right), 
\end{equation*}
and for $n\geq4$, 
\begin{equation*}
N(Vu)^{2}\leq C\left(\int_{|x|\leq1}\frac{|u|^{2}}{|x|^{\alpha+2}}dx+\sup_{R\geq1}\frac{1}{R}\int_{1\leq|x|\leq R}|u|^{2}dx\right).
\end{equation*}
Hence, we have that 
\begin{align*}
	&\|u\|_{L^q(\mathbb{R}^n)}=\|(\Delta+(1\pm i\epsilon))^{-1}(f+2iA\cdot\nabla_{A}u-|A|^2u-Vu)\|_{L^q(\mathbb{R}^n)}\\
	&\leq C\|f\|_{L^p(\mathbb{R}^n)}+C_{1}\left\{\left(\int_{|x|\leq1}\frac{|\nabla_{A}u|^2}{|x|^\alpha}dx\right)^{1/2}+\left(\sup_{R\geq1}\frac{1}{R}\int_{1\leq|x|\leq R}|\nabla_{A}u|^{2}dx\right)^{1/2}
	\nonumber
	\right.
	\\
	&
	\left.+\chi(n)\left(\sup_{R\leq1}\frac{1}{R^{\alpha+1}}\int_{|x|=R}|u|^2d\sigma_{R}(x)\right)^{1/2}+\left(\sup_{R\geq1}\frac{1}{R}\int_{1\leq|x|\leq R}|u|^{2}dx\right)^{1/2}
	\nonumber
	\right.
	\\
	&
	\left.+(1-\chi(n))\left(\int_{|x|\leq1}\frac{|u|^{2}}{|x|^{\alpha+2}}dx\right)^{1/2}\right\},
	\nonumber
\end{align*}
Finally, we conclude that if $u$ is solution of \eqref{eq:Helmmag}, by applying \eqref{eq:step2} we have that if $(\frac{1}{p},\frac{1}{q})\in\Delta_0(n)$  we conclude
\begin{align}\label{eq:grad}
	&\|u\|_{L^q(\mathbb{R}^n)}=\|(\Delta+(1\pm i\epsilon))^{-1}(f+2iA\cdot\nabla_{A}u-|A|^2u-Vu)\|_{L^q(\mathbb{R}^n)}\\
	&\leq C\|f\|_{L^p(\mathbb{R}^n)}+C_{1}\left\{\left(\int_{|x|\leq1}\frac{|\nabla_{A}u|^2}{|x|^\alpha}dx\right)^{1/2}+\left(\sup_{R\geq1}\frac{1}{R}\int_{1\leq|x|\leq R}|\nabla_{A}u|^{2}dx\right)^{1/2}
	\nonumber
	\right.
	\\
	&
	\left.+\chi(n)\left(\sup_{R\leq1}\frac{1}{R^{\alpha+1}}\int_{|x|=R}|u|^2d\sigma_{R}(x)\right)^{1/2}+(1-\chi(n))\left(\int_{|x|\leq1}\frac{|u|^{2}}{|x|^{\alpha+2}}dx\right)^{1/2}\right\}.
	\nonumber
\end{align}
We have to bound the remaining factors in \eqref{eq:grad} in terms of the $L^{p}$ norm of $f$. This will be done in two parts. During the discussion we will distinguish between the terms related with the behaviour of the solution $u$ near the origin and those ones related with the behaviour at infinity. Therefore, we consider separately the cases $R\leq1$ and $R\geq1$. Let us first proceed with the case $R\leq 1$.

Consider a radial function $\varphi=\varphi(|x|)\in C_{0}^\infty(\R^n)$. By multiplying Helmholtz equation \eqref{eq:Helmmag} by $\varphi\bar{u}$ in the $L^2$-sense and taking the resulting real parts it gives the identity
\begin{align}\label{eq:identity}
-&\int_{\R^{n}}\varphi|\nabla_{A}u|^2dx+\frac{1}{2}\int_{\R^{n}}\Delta\varphi|u|^2dx+\int_{\R^{n}}\varphi V|u|^2dx+\int_{\R^{n}}\varphi|u|^2dx\\
&=\Re\int_{\R^{n}} f\varphi\bar{u}dx.
\nonumber
\end{align}
Then, we choose the appropriate multiplier $\varphi$. Consider, for $R\leq1$, $\varphi_{R}=\eta\psi_{R}$, where $\eta\in C^\infty_{0}(\R^n)$ and $\psi_{R}$ are defined respectively by
\begin{equation}
\eta(x)=
\begin{cases}
1\quad\text{if}\quad|x|\leq1,\\
0\quad\text{if}\quad|x|\geq2,
\end{cases}
\end{equation}
and
\begin{equation}
\psi_{R}(x)=
\begin{cases}
-\frac{1}{R^{\alpha}}\quad\text{if}\quad|x|\leq R,\\
-\frac{1}{|x|^{\alpha}}\quad\text{if}\quad R<|x|\leq 1,\\
0\qquad\text{if}\quad|x|>2.
\end{cases}
\end{equation}
where $\alpha<2/(n+1)$.

Notice that $\varphi_{R}$ is continuous. Since $R\leq1$ and this multiplier is chosen in order to control the terms near the origin in \eqref{eq:grad}, the interesting region is $|x|\leq1$, and it is easy to compute that $\Delta\varphi_{R}$ is given by
\begin{equation*}
\Delta\varphi_{R}(x)=\frac{\alpha}{R^{\alpha+1}}\delta_{|x|=R}+\frac{\alpha(n-(\alpha+2))}{|x|^{\alpha+2}}\chi_{\{R<|x|\leq1\}}(x).
\end{equation*}
By inserting $\varphi_{R}$ in \eqref{eq:identity}, we get
\begin{align}\label{eq:ineqR<1}
	&\frac{1}{R^{\alpha}}\int_{B_{R}}|\nabla_{A}u|^{2}dx+\int_{R<|x|\leq1}\frac{|\nabla_{A}u|^{2}}{|x|^{\alpha}}dx+\int_{1<|x|\leq2}|\varphi_{R}||\nabla_{A}u|^{2}dx\\
	&
	\frac{\alpha}{2R^{\alpha+1}}\int_{|x|=R}|u|^{2}d\sigma_{R}(x)+\frac{\alpha(n-(\alpha+2))}{2}\int_{R<|x|\leq1}\frac{|u|^{2}}{|x|^{\alpha+2}}dx
	\nonumber
	\\
	&
	+\frac{1}{2}\int_{1<|x|\leq2}\Delta\varphi_{R}|u|^{2}dx\leq|\int_{\R^{n}}\varphi_{R}f\bar{u}dx|+\frac{1}{R^{\alpha}}\int_{B_{R}}|V||u|^{2}dx
	\nonumber
	\\
	&
	+\int_{R<|x|\leq1}\frac{|V||u|^{2}}{|x|^{\alpha}}dx+\int_{1<|x|\leq2}|\varphi_{R}||V||u|^{2}dx+\frac{1}{R^{\alpha}}\int_{B_{R}}|u|^{2}dx
	\nonumber
	\\
	&
	+\int_{R<|x|\leq1}\frac{|u|^{2}}{|x|^{\alpha}}dx+\int_{1<|x|\leq2}|\varphi_{R}||u|^{2}dx.
	\nonumber
\end{align}
We are going to bound the terms appearing in the RHS of \eqref{eq:ineqR<1}. First, we consider $\frac{1}{R^{\alpha}}\int_{B_{R}}|V||u|^{2}dx$. Recall that $|V(x)|\leq C/|x|^{\gamma}$, $\gamma<(3n+5)/2(n+1)$, $|x|\leq1$. Therefore, we can proceed for $n=3$ as follows
\begin{align*}
	\frac{1}{R^{\alpha}}\int_{B_{R}}|V||u|^{2}dx&\leq\int_{B_{R}}\frac{|V||u|^{2}}{|x|^{\alpha}}dx=\int_{0}^{R}\frac{d\rho}{\rho^{\alpha}}\int_{|x|=\rho}|V||u|^{2}d\sigma_{\rho}(x)
	\\
	&
	\leq\left(\int_{0}^{1}\sup_{|x|=\rho}|x||V(x)|d\rho\right)\sup_{R\leq1}\frac{1}{R^{\alpha+1}}\int_{|x|=R}|u|^2d\sigma_{R}(x).
	\nonumber
\end{align*}
Let us denote by
\begin{equation}\label{eq:norm3}
	\||x|V\|_{L^{1}_{r<1}L^{\infty}(S_{r})}=\int_{0}^{1}\sup_{|x|=\rho}|x||V(x)|d\rho.
\end{equation}
This norm is obviously finite under the assumptions for $V(x)$ in \eqref{eq:H1},
\begin{equation*}
	\||x|V\|_{L^{1}_{r<1}L^{\infty}(S_{r})}=\int_{0}^{1}\sup_{|x|=\rho}|x||V(x)|d\rho\leq C\int_{0}^{1}\frac{d\rho}{\rho^{\gamma-1}}.
\end{equation*}
The last integral is finite whenever $\gamma<2$. We have that 
\begin{equation*}
\frac{3n+5}{2(n+1)}<2\iff n>1.
\end{equation*}
Consequently, we can ensure that for $n=3$
\begin{equation*}
	\frac{1}{R^{\alpha}}\int_{B_{R}}|V||u|^{2}\leq\||x|V\|_{L^{1}_{r<1}L^{\infty}(S_{r})}\sup_{R\leq1}\frac{1}{R^{\alpha+1}}\int_{|x|=R}|u|^2d\sigma_{R}(x).
\end{equation*}
Now, we treat the term $\int_{R<|x|\leq1}\frac{|V||u|^{2}}{|x|^{\alpha}}dx$. Proceeding as above, we get
\begin{align*}
	\int_{R<|x|\leq1}\frac{|V||u|^{2}}{|x|^{\alpha}}dx&=\int_{R}^{1}\frac{d\rho}{\rho^{\alpha}}\int_{|x|=\rho}|V||u|^{2}d\sigma_{\rho}(x)
	\\
	&\leq\left(\int_{0}^{1}\sup_{|x|=\rho}|x||V(x)|d\rho\right)\sup_{R\leq1}\frac{1}{R^{\alpha+1}}\int_{|x|=R}|u|^2d\sigma_{R}(x).
	\nonumber
\end{align*}
Hence,
\begin{equation*}
	\int_{R<|x|\leq1}\frac{|V||u|^{2}}{|x|^{\alpha}}dx\leq\||x|V\|_{L^{1}_{r<1}L^{\infty}(S_{r})}\sup_{R\leq1}\frac{1}{R^{\alpha+1}}\int_{|x|=R}|u|^2d\sigma_{R}(x).
\end{equation*}
Let us now consider the case $n\geq4$. We have that
\begin{align*}
	\frac{1}{R^{\alpha}}\int_{B_{R}}|V||u|^{2}dx&\leq\int_{B_{R}}\frac{|V||u|^{2}}{|x|^{\alpha}}dx
	\\
	&
	\leq\||x|^{2}V\|_{L^{\infty}(|x|\leq1)}\int_{|x|\leq1}\frac{|u|^{2}}{|x|^{\alpha+2}}dx
	\nonumber
\end{align*}
Moreover
\begin{equation*}
	\||x|^{2}V\|_{L^{\infty}(|x|\leq1)}=\sup_{|x|\leq1}|x||V(x)|\leq C\sup_{|x|\leq1}|x|^{2-\gamma}=C
\end{equation*}
provided $\gamma\leq2$. We have that this condition holds, since $\gamma<(3n+5)/2(n+1)$.

Similarly, we get
\begin{equation*}
	\int_{R<|x|\leq1}\frac{|V||u|^{2}}{|x|^{\alpha}}dx\leq\||x|^{2}V\|_{L^{\infty}(|x|\leq1)}\int_{|x|\leq1}\frac{|u|^{2}}{|x|^{\alpha+2}}dx.
\end{equation*}
Now, we consider $\int_{R<|x|\leq1}\frac{|u|^{2}}{|x|^{\alpha}}dx$. This can be bounded as follows. Let $J\in\Z$, $j\leq 0$ such that $2^{J}<R<2^{J+1}$. We get
\begin{align*}
	\int_{R<|x|\leq1}\frac{|u|^{2}}{|x|^{\alpha}}dx&=\sum_{J<j\leq 0}\int_{C(j)}\frac{|u|^{2}}{|x|^{\alpha}}dx\leq \sum_{j\leq0}\int_{2^j}^{2^{j+1}}\frac{|u|^{2}}{2^{j\alpha}}dx
	\\
	&
	\leq C\sup_{R\leq1}\frac{1}{R^{\alpha+1}}\int_{|x|=R}|u|^{2}d\sigma_R(x).
	\nonumber
\end{align*}
In the same way, we obtain
\begin{equation*}
\frac{1}{R^\alpha}\int_{B_R}|u|^2dx\leq C\sup_{R\leq1}\frac{1}{R^{\alpha+1}}\int_{|x|=R}|u|^{2}d\sigma_R(x)
\end{equation*}
We also have that
\begin{equation*}
	\int_{1<|x|\leq2}|\varphi_{R}||V||u|^{2}dx+\int_{1<|x|\leq2}|\varphi_{R}||u|^{2}dx\leq\sup_{R\geq1}\frac{1}{R}\int_{1<|x|\leq R}|u|^{2}dx.
\end{equation*}
The last term is the one which contains $f$. Let us apply H\"older inequality to this term, obtaining
\begin{equation}\label{eq:hol}
	\int_{\mathbb{R}^n}|\varphi_{R}f\bar{u}|dx\leq\|f\|_{L^{p}}\|u\|_{L^{q}}\|\varphi_{R}\|_{L^r},\\
\end{equation}
whenever $1=1/p+1/q+1/r$.

We are going to consider the following subregion of $\Delta(n)$, for $n\geq3$, which will be crucial during the discussion. It is defined by
\begin{equation*}
	\Delta_{0}(n)=\left\{\left(\frac{1}{p},\frac{1}{q}\right)\in\Delta(n):\frac{1}{n+1}\leq \frac{1}{p}-\frac{1}{2},\frac{1}{n+1}\leq \frac{1}{2}-\frac{1}{q}\right\}.
\end{equation*}
$\Delta_{0}(n)$ is the solid triangle determined by the points $Q$, $Q'$ and $Q''$ (See Figure \ref{fig:deltan02S}).
\begin{figure}[!htb]
\begin{center}
\includegraphics[]{deltan0.jpg}
\caption{$\Delta_{0}(n)$, $n\geq3$.}
\label{fig:deltan02S}
\end{center}
\end{figure}
Let us consider the point $Q'\in\Delta(n)$ and apply H\"older inequality when $p$ and $q$ are the coordinates of $Q'$. The point $Q'$ satisfies that
\begin{equation*}
	\frac{1}{p}-\frac{1}{2}=\frac{1}{n+1},\qquad\frac{1}{2}-\frac{1}{q}=\frac{n+2}{n(n+1)}.
\end{equation*}
Therefore, the corresponding $r$ in \eqref{eq:hol} is given by
\begin{equation*}
	\frac{1}{r}=\left(\frac{1}{2}-\frac{1}{q}\right)-\left(\frac{1}{p}-\frac{1}{2}\right)=\frac{2}{n(n+1)}.
\end{equation*}
Then, we have to compute $\|\varphi_{R}\|_{L^r}$, with $r$ given above, under the assumption $R\leq1$. If $n-\alpha r>0$, we get
\begin{equation}\label{eq:condalpha}
	\|\varphi_{R}\|_{L^r}^{r}\leq \frac{C}{n-\alpha r}R^{n-\alpha r}.
\end{equation}
So, in order to keep under control the factor $n-\alpha r$ in the denominator, we take $\alpha<2/(n+1)$. This choice of $\alpha$ justifies the allowed singularity for the potentials $A$ and $V$ given by the (H1) assumption (See \eqref{eq:H1}). With this choice we have that
\begin{equation*}
 n-\alpha r>0.
\end{equation*}
Therefore, we have that, for the point $Q'$, the $L^{r}$ norm of $\varphi_{R}$ is bounded from above
\begin{equation*}
	\|\varphi_{R}\|_{L^r}^{r}\leq C.
\end{equation*}
Now, after taking supremum in \eqref{eq:ineqR<1}, we get for the the point $Q'$ in the $3$-D case
\begin{align*}
&\sup_{R\leq1}\frac{1}{R^{\alpha}}\int_{B_{R}}|\nabla_{A}u|^{2}dx+\int_{|x|\leq1}\frac{|\nabla_{A}u|^{2}}{|x|^{\alpha}}dx+\frac{\alpha}{2}\sup_{R\leq1}\frac{1}{R^{\alpha+1}}\int_{|x|=R}|u|^{2}d\sigma_{R}(x)\\
&\leq C\|f\|_{L^{p}}\|u\|_{L^{q}}+C_1\||x|V\|_{L^{1}_{r<1}L^{\infty}(S_{r})}\sup_{R\leq1}\frac{1}{R^{\alpha+1}}\int_{|x|=R}|u|^2d\sigma_{R}(x)
\nonumber
\\
&
+C_{2}\sup_{R\geq1}\frac{1}{R}\int_{1<|x|\leq R}|u|^{2}dx
\nonumber
\end{align*}
Therefore by making $\||x|V\|_{L^{1}_{r<1}L^{\infty}(S_{r})}$ small enough, we
finally, we obtain for $n=3$
\begin{align*}
&\sup_{R\leq1}\frac{1}{R^{\alpha}}\int_{B_{R}}|\nabla_{A}u|^{2}dx+\int_{|x|\leq1}\frac{|\nabla_{A}u|^{2}}{|x|^{\alpha}}dx+\sup_{R\leq1}\frac{1}{R^{\alpha+1}}\int_{|x|=R}|u|^{2}d\sigma_{R}(x)\\
&\leq C\left(\|f\|_{L^{p}}\|u\|_{L^{q}}+\sup_{R\geq1}\frac{1}{R}\int_{1<|x|\leq R}|u|^{2}dx\right).
\nonumber	
\end{align*}
where $p$ and $q$ are the coordinates of the point $Q'$.

Similarly, for $n\geq4$ and observing that, since $\alpha<1$ we have that
\begin{equation*}
\frac{\alpha(n-(\alpha+2))}{2}>\frac{\alpha(n-3)}{2},
\end{equation*}
we get, after taking supremum in \eqref{eq:ineqR<1},
\begin{align*}
&\sup_{R\leq1}\frac{1}{R^{\alpha}}\int_{B_{R}}|\nabla_{A}u|^{2}dx+\int_{|x|\leq1}\frac{|\nabla_{A}u|^{2}}{|x|^{\alpha}}dx+\frac{\alpha}{2}\sup_{R\leq1}\frac{1}{R^{\alpha+1}}\int_{|x|=R}|u|^{2}d\sigma_{R}(x)\\
&+\frac{\alpha(n-3)}{2}\int_{|x|\leq1}\frac{|u|^{2}}{|x|^{\alpha+2}}dx\leq C\|f\|_{L^{p}}\|u\|_{L^{q}}+C_1\||x|^{2}V\|_{L^{\infty}(|x|\leq1)}\int_{|x|\leq1}\frac{|u|^{2}}{|x|^{\alpha+2}}dx
\nonumber
\\
&
+C_{2}\sup_{R\geq1}\frac{1}{R}\int_{1<|x|\leq R}|u|^{2}dx.
\nonumber	
\end{align*}
As before if we take $\||x|^{2}V\|_{L^{\infty}(|x|\leq1)}$ sufficiently small ,
we get for $n\geq4$
\begin{align*}
&\sup_{R\leq1}\frac{1}{R^{\alpha}}\int_{B_{R}}|\nabla_{A}u|^{2}dx+\int_{|x|\leq1}\frac{|\nabla_{A}u|^{2}}{|x|^{\alpha}}dx+\sup_{R\leq1}\frac{1}{R^{\alpha+1}}\int_{|x|=R}|u|^{2}d\sigma_{R}(x)
\\
&
+\int_{|x|\leq1}\frac{|u|^{2}}{|x|^{\alpha+2}}dx\leq C\left(\|f\|_{L^{p}}\|u\|_{L^{q}}+\sup_{R\geq1}\frac{1}{R}\int_{1<|x|\leq R}|u|^{2}dx\right),
\nonumber	
\end{align*}
where $p$ and $q$ are the coordinates of the point $Q'$.


Now, we are going to deal with the case $R\geq1$. Let us consider $\varphi_{R}\in C^{\infty}_{0}(\R^n)$ given by
\begin{equation*}
	\varphi_{R}=
	\begin{cases}
		0\qquad\text{if}\qquad|x|<\frac{1}{2},\\
		-\frac{1}{R}\quad\text{if}\quad1<|x|\leq R,\\
		0\qquad\text{if}\qquad|x|>2R.
	\end{cases}
\end{equation*}
By inserting this function in \eqref{eq:identity}, we get
\begin{align}\label{eq:ineqR>1}
	&\int_{\frac{1}{2}<|x|\leq1}|\varphi_{R}||\nabla_{A}u|^{2}dx+\frac{1}{R}\int_{1<|x|\leq R}|\nabla_{A}u|^{2}dx+\int_{R<|x|\leq2R}|\varphi_{R}||\nabla_{A}u|^{2}dx
	\\
	&
	+\frac{1}{2}\int_{|x|\leq2R}\Delta\varphi_{R}|u|^{2}dx\leq|\int_{\R^{n}}\varphi_{R}f\bar{u}dx|+\int_{\frac{1}{2}<|x|\leq1}|\varphi_{R}||V||u|^{2}dx
	\nonumber
	\\
	&
	+\frac{1}{R}\int_{1<|x|\leq R}|V||u|^{2}dx+\int_{R<|x|\leq2R}|\varphi_{R}||V||u|^{2}dx+\int_{\frac{1}{2}<|x|\leq1}|\varphi_{R}||u|^{2}dx
	\nonumber
	\\
	&
	+\frac{1}{R}\int_{1<|x|\leq R}|u|^{2}dx+\int_{R<|x|\leq2R}|\varphi_{R}||u|^{2}dx.
	\nonumber
\end{align}
Some terms on the LHS of \eqref{eq:ineqR>1} are positive, so we discard them, obtaining the following inequality
\begin{align}\label{eq:1}
	&\frac{1}{R}\int_{1<|x|\leq R}|\nabla_{A}u|^{2}dx\leq|\int_{\R^{n}}\varphi_{R}f\bar{u}dx|+\int_{\frac{1}{2}<|x|\leq1}|\varphi_{R}||V||u|^{2}dx
	\nonumber
	\\
	&
	+\frac{1}{R}\int_{1<|x|\leq R}|V||u|^{2}dx+\int_{R<|x|\leq2R}|\varphi_{R}||V||u|^{2}dx+\int_{\frac{1}{2}<|x|\leq1}|\varphi_{R}||u|^{2}dx
	\nonumber
	\\
	&+\frac{1}{R}\int_{1<|x|\leq R}|u|^{2}dx+\int_{R<|x|\leq2R}|\varphi_{R}||u|^{2}dx.
	\nonumber
\end{align}
We treat the terms appearing on the RHS of the previous inequality. Since $|V(x)|\leq C/|x|^{1+\mu}$, $|x|\geq1$, we can proceed as follows
\begin{align}
	\frac{1}{R}\int_{1<|x|\leq R}|V||u|^{2}dx&\leq\int_{1<|x|\leq R}\frac{|V||u|^{2}}{|x|}dx
	\\
	&
	\leq C\sum_{j\geq0}2^{-j(1+\mu)}2^{-j}\int_{C(j)}|u|^{2}dx
	\nonumber
	\\
	&
	\leq C\left(\sup_{R\geq1}\frac{1}{R}\int_{1\leq|x|\leq R}|u|^{2}dx\right)\sum_{j\geq0}2^{-j(1+\mu)}
	\nonumber
	\\
	&
	\leq C\sup_{R\geq1}\frac{1}{R}\int_{1\leq|x|\leq R}|u|^{2}dx.
	\nonumber
\end{align}
Similarly, we have
\begin{align*}
	\int_{R<|x|\leq2R}|\varphi_{R}||V||u|^{2}dx&\leq\frac{1}{R}\int_{1<|x|\leq2R}|V||u|^{2}dx
	\\
	&
	\leq C\sup_{R\geq1}\frac{1}{R}\int_{1\leq|x|\leq R}|u|^{2}dx.
	\nonumber
\end{align*}
Moreover
\begin{equation*}
	\int_{\frac{1}{2}<|x|\leq1}|\varphi_{R}||V||u|^{2}dx\leq C\sup_{R\geq1}\frac{1}{R}\int_{1\leq|x|\leq R}|u|^{2}dx,
\end{equation*}
\begin{equation*}
	\int_{\frac{1}{2}<|x|\leq1}|\varphi_{R}||u|^{2}dx\leq C\sup_{R\geq1}\frac{1}{R}\int_{1\leq|x|\leq R}|u|^{2}dx,
\end{equation*}
\begin{equation*}
	\frac{1}{R}\int_{1<|x|\leq R}|u|^{2}dx\leq \sup_{R\geq1}\frac{1}{R}\int_{1\leq|x|\leq R}|u|^{2}dx,
\end{equation*}
and
\begin{align}\label{eq:bR>1ul}
	\int_{R<|x|\leq2R}|\varphi_{R}||u|^{2}dx&\leq\frac{1}{R}\int_{1<|x|\leq2R}|u|^{2}dx
	\\
	&
	\leq C\sup_{R\geq1}\frac{1}{R}\int_{1\leq|x|\leq R}|u|^{2}dx.
	\nonumber
\end{align}
Hence, after taking supremum in the LHS of \eqref{eq:ineqR>1}, we obtain
\begin{equation*}
	\sup_{R\geq1}\frac{1}{R}\int_{1<|x|\leq R}|\nabla_{A}u|^{2}dx\leq|\int_{\R^{n}}\varphi_{R}f\bar{u}dx|+C\sup_{R\geq1}\frac{1}{R}\int_{1\leq|x|\leq R}|u|^{2}dx.
\end{equation*}
As when we discussed the case $R\leq1$, for the term containing $f$, we apply H\"older inequality and obtain
\begin{equation}\label{eq:hol2}
	\int_{\mathbb{R}^n}|\varphi_{R}f\bar{u}|dx\leq\|f\|_{L^{p}}\|u\|_{L^{q}}\|\varphi_{R}\|_{L^r},\\
\end{equation}
whenever $1=1/p+1/q+1/r$.

Therefore, we have to compute $\|\varphi_{R}\|_{L^r}$ under the assumption $R\geq1$. We get the following
\begin{equation*}
	\|\varphi_{R}\|_{L^r}^{r}\leq CR^{n-r}.
\end{equation*}
This term is bounded for $R\geq1$ if
\begin{equation}\label{eq:conditions}
\frac{n}{r}\leq1,\qquad1=\frac{1}{p}+\frac{1}{q}+\frac{1}{r}.
\end{equation}
So, we have to find the region for $p$ and $q$, for $(\frac{1}{p},\frac{1}{q})\in\Delta_{0}(n)$, where \eqref{eq:conditions} are satisfied.

We have that \eqref{eq:conditions} holds if $(\frac{1}{p},\frac{1}{q})\in\Delta_{0}^{-}(n)$, where $\frac{\Delta_{0}(n)}{2}$ is given by
\begin{equation*}
	\Delta_{0}^{-}(n)=\left\{\left(\frac{1}{p},\frac{1}{q}\right)\in\Delta_{0}(n):\frac{1}{q}\leq 1-\frac{1}{p}\right\}.
\end{equation*}
The region $\Delta_{0}^{-}(n)$ is determined by the points $\left(\frac{1}{p},\frac{1}{q}\right)\in\Delta_{0}(n)$ under the diagonal $1/q=1-1/p$, including the points $\left(\frac{1}{p},\frac{1}{q}\right)$ in the duality line (See Figure \ref{fig:deltan0med2}).

\begin{figure}[!htb]
\begin{center}
\includegraphics[]{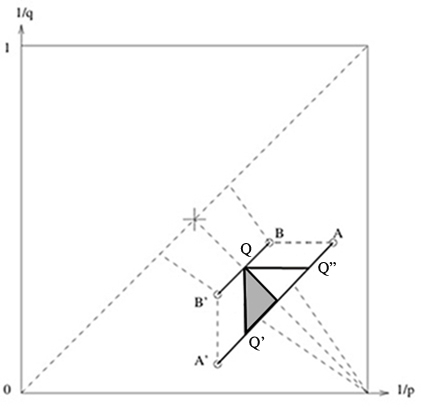}
\caption{$\Delta_{0}^{-}(n)$, $n\geq3$.}
\label{fig:deltan0med2}
\end{center}
\end{figure}
Hence, if $(\frac{1}{p},\frac{1}{q})\in\Delta_{0}^{-}(n)$, we can apply H\"older inequality and we obtain
\begin{equation}\label{eq:boundR>1}
	\sup_{R\geq1}\frac{1}{R}\int_{1<|x|\leq R}|\nabla_{A}u|^{2}dx\leq C\|f\|_{L^{p}}\|u\|_{L^{q}}+C_{1}\sup_{R\geq1}\frac{1}{R}\int_{1\leq|x|\leq R}|u|^{2}dx.
\end{equation}
Recall that we were able to apply H\"older inequality for the case $R\leq1$ if $p$ and $q$ were the coordinates of the point $Q'$. We are going to remind the results we obtained. For $n=3$, we got the following
\begin{align}\label{eq:boundR<13}
&\sup_{R\leq1}\frac{1}{R^{\alpha}}\int_{B_{R}}|\nabla_{A}u|^{2}dx+\int_{|x|\leq1}\frac{|\nabla_{A}u|^{2}}{|x|^{\alpha}}dx+\sup_{R\leq1}\frac{1}{R^{\alpha+1}}\int_{|x|=R}|u|^{2}d\sigma_{R}(x)\\
&\leq C\left(\|f\|_{L^{p}}\|u\|_{L^{q}}+\sup_{R\geq1}\frac{1}{R}\int_{1<|x|\leq R}|u|^{2}dx\right).
\nonumber	
\end{align}
and for $n\geq4$
\begin{align}\label{eq:boundR<14}
&\sup_{R\leq1}\frac{1}{R^{\alpha}}\int_{B_{R}}|\nabla_{A}u|^{2}dx+\int_{|x|\leq1}\frac{|\nabla_{A}u|^{2}}{|x|^{\alpha}}dx+\sup_{R\leq1}\frac{1}{R^{\alpha+1}}\int_{|x|=R}|u|^{2}d\sigma_{R}(x)
\\
&
+\int_{|x|\leq1}\frac{|u|^{2}}{|x|^{\alpha+2}}dx\leq C\left(\|f\|_{L^{p}}\|u\|_{L^{q}}+\sup_{R\geq1}\frac{1}{R}\int_{1<|x|\leq R}|u|^{2}dx\right).
\nonumber	
\end{align}
Now, we are going to apply all the obtained results in order to get the desired estimate. From the analysis for the cases $R\leq1$ and $R\geq1$ we have that, for $n\geq3$, from \eqref{eq:boundR>1}, \eqref{eq:boundR<13} in the $3$-D case, and from \eqref{eq:boundR>1},\eqref{eq:boundR<14} for higher dimensions, we can ensure that for the point $Q'$ it holds
\begin{equation*}
	\|u\|_{L^q(\mathbb{R}^n)}
	\leq C\|f\|_{L^p(\R^n)}+C_1\left(\|f\|_{L^{p}}\|u\|_{L^{q}}+\sup_{R\geq1}\frac{1}{R}\int_{1<|x|\leq R}|u|^{2}dx\right)^{\frac{1}{2}}.
	\nonumber
\end{equation*}
Then, we remind \eqref{eq:step2}. It verifies that, whenever $\frac{1}{n+1}\leq\frac{1}{p}-\frac{1}{2}$ we have
\begin{equation*}
	\triple{u}\leq C \|f\|_{{L^p(\mathbb{R}^n)}}.
\end{equation*}
By using this inequality,  we have that, for the point $Q'$, it holds
\begin{align*}
	\|u\|_{L^q(\mathbb{R}^n)}&=\|(\Delta+(1\pm i\epsilon))^{-1}(f+2iA\cdot\nabla_{A}u-|A|^2u-Vu)\|_{L^q(\mathbb{R}^n)}
	\nonumber
	\\
	&
	\leq C\|f\|_{L^{p}}+\lambda\|u\|_{L^{q}}+C(\lambda)\|f\|_{L^{p}}.
\end{align*}
for $\lambda, C(\lambda)>0$. We choose $\lambda$ sufficiently small and conclude the desired bound for the point $Q'$
\begin{equation}\label{eq:finalestimate}
		\|u\|_{L^q(\mathbb{R}^n)}\leq C \|f\|_{L^p(\mathbb{R}^n)}.
\end{equation}
We also have that \eqref{eq:finalestimate} is true for the points in the segment $QQ'$, (see Figure \ref{fig:deltan0med2}), since the coordinate $p$ is the same for all that points, and the boundedness of the solution when the coordinates $p$ and $q$ belong to points in the segment $QQ'$ is derived from the corresponding one for $Q'$. 

Now, by duality, the same holds for the points in the segment $QQ''$ and finally, by an interpolation argument, we have that, if $(\frac{1}{p},\frac{1}{q})\in\Delta_{0}(n)$, we get the desired bound 
\begin{equation*}
		\|u\|_{L^q(\mathbb{R}^n)}\leq C \|f\|_{L^p(\mathbb{R}^n)}.
\end{equation*}
The proof is complete.
\end{proof}
We have that, due to the presence of the first order term associated to the gradient, the region where we prove the estimates for the solution of the electromagnetic Helmholtz equation is smaller than the one valid in the free case. Therefore it is natural to wonder if, whenever $A\equiv0$ (i.e., we deal with the Helmholtz equation with electric potential), this region can be extended to the whole $\Delta(n)$. Therefore, we will pass to consider the case of Helmholtz equation with electric potential $V$. Logically, outside $\Delta_{0}(n)$, the assumed decay for $V$ will not be enough in order to get the estimates.

In the following result we extend the boundedness of the solution to the whole $\Delta(n)$. We obtain the next result.

\begin{theorem}
\label{thm:extension}

Let $u\in H^1(\R^n)$ be a solution of 
	\begin{equation}\label{eq:Helmelec}
		\Delta u+(1\pm i\epsilon)u+V(x)u=f,\quad \text{in}\quad \mathbb{R}^n,\quad n\geq3,\quad\epsilon\neq0.
	\end{equation}
	Let $V$ satisfies assumptions  \eqref{eq:potentialinf}-\eqref{eq:V2} for $n\geq3$ in Theorem \ref{thm:apriori}, and suppose that there exist positive constants $C,\gamma,\delta$ such that
	\begin{equation}
		|V(x)|\leq 
		\begin{cases}
		\frac{C}{|x|^{\gamma}},\quad\text{if}\quad|x|\leq1,\\
		\frac{C}{|x|^{\delta}},\quad\text{if}\quad|x|\geq1
		\end{cases}
	\end{equation}
	where $\gamma$ and $\delta$ satisfies for $\left(\frac{1}{p},\frac{1}{q}\right)\in\Delta(n)\setminus\Delta_{0}(n)$,
	\begin{equation}
	\gamma <
	\begin{cases}
	\frac{1}{2}+n\left\{\frac{2}{n+1}-(\frac{1}{2}-\frac{1}{q})\right\},\quad\frac{1}{2n}<\frac{1}{2}-\frac{1}{q}<\frac{1}{n+1},\\
	\frac{1}{2}+n\left\{\frac{2}{n+1}-(\frac{1}{p}-\frac{1}{2})\right\},\quad\frac{1}{2n}<\frac{1}{p}-\frac{1}{2}<\frac{1}{n+1},
	\end{cases}
	\end{equation}
	\begin{equation}
	\delta >
	\begin{cases}
	\frac{1}{2}+n\left\{\frac{2}{n+1}-(\frac{1}{2}-\frac{1}{q})\right\},\quad\frac{1}{2n}<\frac{1}{2}-\frac{1}{q}<\frac{1}{n+1},\\
	\frac{1}{2}+n\left\{\frac{2}{n+1}-(\frac{1}{p}-\frac{1}{2})\right\},\quad\frac{1}{2n}<\frac{1}{p}-\frac{1}{2}<\frac{1}{n+1}.
	\end{cases}
	\end{equation}
	Then, there exists a constant C, independent of $\epsilon$, such that 
	\begin{equation}
		\|u\|_{L^q(\mathbb{R}^n)}\leq C \|f\|_{L^p(\mathbb{R}^n)}.
	\end{equation}
\end{theorem}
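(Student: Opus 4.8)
The plan is to exploit the absence of the magnetic potential. With $A\equiv0$ the equation \eqref{eq:Helmelec} becomes $\Delta u+(1\pm i\epsilon)u=f-Vu$, so $u$ solves the free modified Helmholtz equation with right-hand side $f-Vu$. Since there is no first order term $A\cdot\nabla_A u$, the obstruction that confined Theorem \ref{thm:elecmag} to $\Delta_0(n)$ is gone, and I would run a purely perturbative argument on all of $\Delta(n)$. Writing $R_0=(\Delta+(1\pm i\epsilon))^{-1}$ and applying Theorem \ref{thm:Helmfree} to $R_0 f$ at the pair $(\frac1p,\frac1q)$, the whole problem reduces to estimating $R_0(Vu)$. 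On $\Delta_0(n)$ itself nothing new is needed: the estimate is Theorem \ref{thm:elecmag} with $A\equiv0$. So I would concentrate on $\Delta(n)\setminus\Delta_0(n)$.

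The key idea is to feed the source $Vu$ into the free estimate not at the exponent $p$ but at an auxiliary exponent $\tilde p$ chosen as large as possible, namely on the lower edge of $\Delta(n)$, $\frac1{\tilde p}-\frac1q=\frac2{n+1}$ (one must check that $(\frac1{\tilde p},\frac1q)\in\Delta(n)$, i.e.\ that the competing constraint $\frac1{\tilde p}>\frac{n+1}{2n}$ is dominated). This gives $\|u\|_{L^q}\le C\|f\|_{L^p}+C\|Vu\|_{L^{\tilde p}}$ with $C$ independent of $\epsilon$. I would then estimate $\|Vu\|_{L^{\tilde p}}$ by a dyadic decomposition over the annuli $C(j)$: on each annulus H\"older with exponents $(\tilde s,2)$, where $\frac1{\tilde s}=\frac1{\tilde p}-\frac12=\frac2{n+1}-\big(\frac12-\frac1q\big)$, gives $\|Vu\|_{L^{\tilde p}(C(j))}\le\|V\|_{L^{\tilde s}(C(j))}\|u\|_{L^2(C(j))}$, and $\|u\|_{L^2(C(j))}\le C\,2^{j/2}\triple{u}$ by definition of the Morrey--Campanato norm. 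Inserting $|V|\le C|x|^{-\gamma}$ for $j\le0$ and $|V|\le C|x|^{-\delta}$ for $j\ge0$, the two resulting geometric series converge precisely when $\gamma<\frac12+n\{\frac2{n+1}-(\frac12-\frac1q)\}$ and $\delta>\frac12+n\{\frac2{n+1}-(\frac12-\frac1q)\}$, which are exactly the hypotheses of the theorem. The outcome is $\|Vu\|_{L^{\tilde p}}\le C\triple{u}$.

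It then remains to control $\triple{u}$ by $\|f\|_{L^p}$. This is the dual form of the local smoothing bound (Step 2 in the proof of Theorem \ref{thm:elecmag}, specialized to $A\equiv0$), which yields $\triple{u}\le C\|f\|_{L^p}$ whenever $\frac1{n+1}\le\frac1p-\frac12$. In the piece of $\Delta(n)\setminus\Delta_0(n)$ where $\frac1{2n}<\frac12-\frac1q<\frac1{n+1}$ one has automatically $\frac1p-\frac12\ge\frac1{n+1}$ (from $\frac1p-\frac1q\ge\frac2{n+1}$), so Step 2 applies and the three displays combine to $\|u\|_{L^q}\le C\|f\|_{L^p}$. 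The remaining piece, where $\frac1{2n}<\frac1p-\frac12<\frac1{n+1}$, I would obtain by duality: the adjoint resolvent corresponds to $\mp\epsilon$ and interchanges the roles of $\frac1p-\frac12$ and $\frac12-\frac1q$, mapping one piece onto the other while preserving the symmetric conditions on $\gamma$ and $\delta$.

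I expect the main obstacle to be the choice of the auxiliary exponent and its admissibility. The efficiency of the scheme rests entirely on putting $Vu$ through the free resolvent at the lower-edge exponent $\frac1{\tilde p}-\frac1q=\frac2{n+1}$: this is what makes the H\"older exponent $\tilde s$ on $V$ reproduce the exact thresholds $\frac12+n\{\frac2{n+1}-(\frac12-\frac1q)\}$, and any other choice would misalign them. The genuinely indispensable input is the extra decay $\delta$ at infinity: without short-range decay the far dyadic series diverges, which is precisely why $\Delta(n)\setminus\Delta_0(n)$ demands more of $V$ than $\Delta_0(n)$ does. The remaining care is bookkeeping: verifying $(\frac1{\tilde p},\frac1q)\in\Delta(n)$ on the whole region, and reconciling the admissible singularity $\gamma$ with the constraints already required for the local smoothing control of $\triple{u}$.
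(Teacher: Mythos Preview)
Your proposal is correct and follows essentially the same route as the paper. The paper too skips Steps 1 and 2 (they are inherited from Theorem \ref{thm:elecmag} with $A\equiv0$), and in Step 3 feeds $Vu$ into the free resolvent estimate at the auxiliary exponent $p_1$ determined by $\frac{1}{p_1}-\frac{1}{q}=\frac{2}{n+1}$ (your $\tilde p$), then runs the same dyadic-plus-H\"older argument against the Morrey--Campanato norm $\triple{u}$, obtaining exactly the thresholds on $\gamma$ and $\delta$, invokes Step 2 to bound $\triple{u}$ by $\|f\|_{L^p}$, and finishes the complementary piece by duality.
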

\begin{proof}
As for the proof of Theorem \ref{thm:elecmag}, this will be divided in three steps. The first two steps are the same, so we will skip them. The main difference appears in Step 3.
\begin{remark}\label{rm:proof}
As we have seen in Theorem \ref{thm:elecmag}, for $\left(\frac{1}{p},\frac{1}{q}\right)\in\Delta_{0}(n)$, the short-range decay assumptions for the electric potential $V$ and the magnetic potential $A$ are sufficient in order to prove the $L^{p}$-$L^{q}$ estimates. As we will see, outside this region, more decay for the electric potential $V$ potential is needed. 
\end{remark}

\noindent{\bf Step 3.} Now let us consider $\left(\frac{1}{p},\frac{1}{q}\right)\in\Delta(n)$ such that $\frac{1}{2n}<\frac{1}{2}-\frac{1}{q}<\frac{1}{n+1}$. Then, from Theorem \ref{thm:Helmfree} and observing that, since the dual estimate of \eqref{eq:tripletrun} in Theorem \ref{thm:triple} can not be applied for the perturbative term $Vu$ because we are outside the allowed range for $q$, we conclude that the $L^q$ norm of the solution of the equation \eqref{eq:Helmelec} can be bounded as follows 
\begin{align}\label{eq:Vu}
	\|u\|_{L^q(\mathbb{R}^n)}&=\|(\Delta+(1\pm i\epsilon))^{-1}(f-Vu)\|_{L^q(\mathbb{R}^n)}\\
	&
	\leq C(\|f\|_{{L^p(\mathbb{R}^n)}}+\|Vu\|_{{L^{p_{1}}(\mathbb{R}^n)}}).
	\nonumber
\end{align}
where $C$ does not depend on $\epsilon$.

Here $p_{1}$ is given by
\begin{equation*}
	\frac{1}{p_{1}}-\frac{1}{q}=\frac{2}{n+1},\quad\frac{1}{2n}<\frac{1}{2}-\frac{1}{q}<\frac{1}{n+1}.
\end{equation*}
We have taken the point $p_{1}$ being in the line $1/p_{1}-1/q=2/(n+1)$ since we want to require the smallest decay at infinity for $V$.

In order to give the necessary conditions that must satisfy the electric potential, namely, the admissible singularity at the origin and the decay at infinity, we analyze separately the $L^{p_{1}}$ norm of $Vu$. 

We have that 
\begin{equation*}
	\int_{|x|\leq1}|Vu|^{p_{1}}dx=\sum_{j<0}\int_{C(j)}|Vu|^{p_{1}}dx
\end{equation*}
where, $C(j)$ was the annulus given by
\begin{equation*}
C(j)=\{x\in\mathbb{R}^n:2^j\leq|x|\leq2^{j+1}\},
\end{equation*}
Therefore, if $|V(x)|\leq C/|x|^{\gamma}$, $|x|\leq1$, we get
\begin{equation*}
	\int_{|x|\leq1}|Vu|^{p_{1}}dx=\sum_{j<0}\int_{C(j)}|Vu|^{p_{1}}dx\leq C\sum_{j<0}\int_{C(j)}\left|\frac{u}{|x|^{\gamma}}\right|^{p_{1}}dx.
\end{equation*}
Now, we apply H\"older inequality for
\begin{equation}\label{eq:Holr}
	\frac{1}{p_{1}}=\frac{1}{q}+\frac{2}{n+1}=\frac{2}{n+1}-\left(\frac{1}{2}-\frac{1}{q}\right)+\frac{1}{2}=\frac{1}{r}+\frac{1}{2},
\end{equation} 
and we obtain the following.
\begin{align*}
	\sum_{j<0}\int_{C(j)}\left|\frac{u}{|x|^{\gamma}}\right|^{p_{1}}dx&\leq C\sum_{j<0}\left(\frac{1}{2^{j}}\int_{C(j)}|u|^{2}dx\right)^{\frac{p_{1}}{2}}\left(\int_{C(j)}\frac{dx}{(2^{j})^{(\gamma-\frac{1}{2})r}}\right)^{\frac{p_{1}}{r}}\\
	&
	\leq C\left(\sup_{R\leq1}\frac{1}{R}\int_{B_{R}}|u|^{2}dx\right)^{\frac{p_{1}}{2}}\sum_{j<0}\left(\int_{C(j)}\frac{dx}{(2^{j})^{(\gamma-\frac{1}{2})r}}\right)^{\frac{p_{1}}{r}}.
	\nonumber	
\end{align*}
Then, we have that
\begin{align*}
\|Vu\|_{L^{p_{1}}(|x|\leq1)}&\leq C\left(\sup_{R\leq1}\frac{1}{R}\int_{B_{R}}|u|^{2}dx\right)^{\frac{1}{2}}\sum_{j<0}\left(\int_{C(j)}\frac{dx}{(2^{j})^{(\gamma-\frac{1}{2})r}}\right)^{\frac{1}{r}}\\
&
\leq C\left(\sup_{R\leq1}\frac{1}{R}\int_{B_{R}}|u|^{2}dx\right)^{\frac{1}{2}}\sum_{j<0}(2^{j})^{\frac{n}{r}-(\gamma-\frac{1}{2})}.
\nonumber
\end{align*}
Hence, by imposing 
\begin{equation}\label{eq:con1}
	\frac{n}{r}-(\gamma-\frac{1}{2})>0,
\end{equation}
we get that
\begin{equation*}
	\|Vu\|_{L^{p_{1}}(|x|\leq1)}\leq C\left(\sup_{R\leq1}\frac{1}{R}\int_{B_{R}}|u|^{2}dx\right)^{\frac{1}{2}}.
\end{equation*}
Remind that, from \eqref{eq:Holr}
\begin{equation*}
	\frac{1}{r}=\frac{2}{n+1}-\left(\frac{1}{2}-\frac{1}{q}\right).
\end{equation*}
Therefore, \eqref{eq:con1} is satisfied whenever
\begin{equation}\label{eq:gamma}
	\gamma<\frac{1}{2}+n\left\{\frac{2}{n+1}-(\frac{1}{2}-\frac{1}{q})\right\}.
\end{equation}
Now, we proceed with the term $\|Vu\|_{L^{p_{1}}(|x|\geq1)}$. We have that
\begin{equation*}
	\int_{|x|\geq1}|Vu|^{p_{1}}dx=\sum_{j\geq0}\int_{C(j)}|Vu|^{p_{1}}dx.
\end{equation*}
Now, since $|V(x)|\leq C/|x|^{\delta}$, $|x|\geq1$, we get
\begin{equation*}
	\int_{|x|\geq1}|Vu|^{p_{1}}dx=\sum_{j\geq0}\int_{C(j)}|Vu|^{p_{1}}dx\leq C\sum_{j\geq0}\int_{C(j)}\left|\frac{u}{|x|^{\delta}}\right|^{p_{1}}dx.
\end{equation*}
As before, if we apply H\"older inequality with the exponents
\begin{equation*}
	\frac{1}{p_{1}}=\frac{1}{q}+\frac{2}{n+1}=\frac{2}{n+1}-\left(\frac{1}{2}-\frac{1}{q}\right)+\frac{1}{2}=\frac{1}{r}+\frac{1}{2},
\end{equation*} 
we obtain
\begin{align*}
	\sum_{j\geq0}\int_{C(j)}\left|\frac{u}{|x|^{\delta}}\right|^{p_{1}}dx&\leq C\sum_{j\geq0}\left(\frac{1}{2^{j}}\int_{C(j)}|u|^{2}dx\right)^{\frac{p_{1}}{2}}\left(\int_{C(j)}\frac{dx}{(2^{j})^{(\delta-\frac{1}{2})r}}\right)^{\frac{p_{1}}{r}}\\
	&
	\leq C\left(\sup_{R\geq1}\frac{1}{R}\int_{1<|x|\leq R}|u|^{2}dx\right)^{\frac{p_{1}}{2}}\sum_{j\geq0}\left(\int_{C(j)}\frac{dx}{(2^{j})^{(\delta-\frac{1}{2})r}}\right)^{\frac{p_{1}}{r}}.
	\nonumber	
\end{align*}
Hence, it holds
\begin{align*}
\|Vu\|_{L^{p_{1}}(|x|\geq1)}&\leq C\left(\sup_{R\geq1}\frac{1}{R}\int_{1<|x|\leq R}|u|^{2}dx\right)^{\frac{1}{2}}\sum_{j\geq0}\left(\int_{C(j)}\frac{dx}{(2^{j})^{(\delta-\frac{1}{2})r}}\right)^{\frac{1}{r}}\\
&
\leq C\left(\sup_{R\geq1}\frac{1}{R}\int_{1<|x|\leq R}|u|^{2}dx\right)^{\frac{1}{2}}\sum_{j\geq0}(2^{j})^{\frac{n}{r}-(\delta-\frac{1}{2})}.
\nonumber
\end{align*}
Then, if we assume that 
\begin{equation}\label{eq:con2}
	\frac{n}{r}-(\delta-\frac{1}{2})<0,
\end{equation}
we have that
\begin{equation*}
	\|Vu\|_{L^{p_{1}}(|x|\geq1)}\leq C\left(\sup_{R\geq1}\frac{1}{R}\int_{1<|x|\leq R}|u|^{2}dx\right)^{\frac{1}{2}}.
\end{equation*}
This is satisfied if 
\begin{equation}\label{eq:delta}
	\delta>\frac{1}{2}+n\left\{\frac{2}{n+1}-(\frac{1}{2}-\frac{1}{q})\right\}.
\end{equation}

Now, from \eqref{eq:Vu} and under the assumptions \eqref{eq:gamma} and \eqref{eq:delta} for $V(x)$, we conclude
\begin{align*}
&\|u\|_{L^q(\mathbb{R}^n)}=\|(\Delta+(1\pm i\epsilon))^{-1}(f-Vu)\|_{L^q(\mathbb{R}^n)}\\
	&
	\leq C(\|f\|_{{L^p(\mathbb{R}^n)}}+\|Vu\|_{{L^{p_{1}}(\mathbb{R}^n)}})
	\nonumber
	\\
	&
	\leq C\left(\|f\|_{{L^p(\mathbb{R}^n)}}+\left(\sup_{R\leq1}\frac{1}{R}\int_{B_{R}}|u|^{2}dx\right)^{\frac{1}{2}}+\left(\sup_{R\geq1}\frac{1}{R}\int_{1<|x|\leq R}|u|^{2}dx\right)^{\frac{1}{2}}\right).
	\nonumber
\end{align*}
Finally, reminding that $\triple{u}$ is still bounded for  $\left(\frac{1}{p},\frac{1}{q}\right)\in\Delta(n)$ such that $\frac{1}{2n}<\frac{1}{2}-\frac{1}{q}<\frac{1}{n+1}$, we can conclude the final estimate
\begin{equation*}
		\|u\|_{L^q(\mathbb{R}^n)}\leq C \|f\|_{L^p(\mathbb{R}^n)}.
\end{equation*}
Now, by applying duality we have that the result is true whenever $\left(\frac{1}{p},\frac{1}{q}\right)\in\Delta(n)$ such that $\frac{1}{2n}<\frac{1}{p}-\frac{1}{2}<\frac{1}{n+1}$.
This ends the proof.
\end{proof}
\begin{remark}
Notice that the necessary decay for $V$ given by $\delta$ in \eqref{eq:delta} grows as we approach the upper frontier of $\Delta(n)$ (See line segment $AB$ in Figure \ref{fig:deltan0med2}). However, we can always take for $|x|\geq1$, $|V(x)|\leq C/|x|^\alpha$, $\alpha<2$, where $C$ is  not necessarily small.
\end{remark}

\begin{acknowledgments}
This work is part of the author's Ph.D. thesis. The author is supported by the Academy of Finland (Centre of Excellence in
Inverse Problems Research, grant number 250 215) and belongs to the project MTM2011-24054 Ministerio de Ciencia y Tecnolog\'ia de Espa\~na.
\end{acknowledgments}


\begin{thebibliography}{abc12e}
\bibitem[AH]{AH}
{\sc Agmon, S., H\"ormander, L.}, Asymptotic properties of solutions of differential equations with simple characteristics, \textit{J. Anal. Math.}, {\bf 30}, 1--38, (1976).
\bibitem[AHS]{AHS}
{\sc Avron, J., Herbst, I., Simon, B.}, Schr\"odinger operators with magnetic fields I. General interactions, \textit{Duke Math. J.}, {\bf 45}, 847--883, (1978).
\bibitem[BPST]{BPST}
{\sc Burq, N., Planchon, F., Stalker, J.G., Tahvildar-Zadeh, A.S.}, Strichartz estimates for the wave and schr\"odinger equations with the inverse-square potential, \textit{J. Funct. Anal}, {\bf 203}, 519--549, (2003).
\bibitem[CS]{CS}
{\sc Carbery, A., Soria, F.}, Almost-everywhere convergence of Fourier integrals for functions in Sobolev spaces, and an $L^2$-localisation principle, \textit{Rev. Mat. Iberoamericana}, {\bf 4}, 319--337, (1988) (2).
\bibitem[CFKS]{CFKS}
{\sc Cycon, H.L., Froese, R., Kirsch, W., Simon, B.}, Schr\"odinger operators with Applications to Quantum Mechanics and Global Geometry, \textit{Texts and Monographs in Physics}, Springer Verlag, Berlin, Heidelberg, New York, (1987).
\bibitem[DFVV]{DFVV}
{\sc D'Ancona, P., Fanelli, L., Vega, L., Visciglia, N.}, Endpoint Strichartz estimates for the magnetic
Schr\"odinger equation, \textit{J. Funct. Anal.}, {\bf 258}, 3227--3240, (2010).
\bibitem[F]{F}
{\sc Fanelli, L.}, Non-trapping magnetic fields and Morrey-Campanato estimates
for Schr\"odinger operators, \textit{J. Math. Anal. Appl.}, {\bf 357}, 1--14, (2009).
\bibitem[G]{G}
{\sc Garc\'ia, A.}, $L^p$-$L^q$ estimates for electromagnetic Helmholtz equation, \textit{J. Math. Anal. Appl}, {\bf 384} 2 409--420, (2011).
\bibitem[Gut]{Gut}
{\sc Guti\'errez, S.}, Un problema de contorno para la ecuaci\'on de Ginzburg-Landau. PhD.
Thesis, \textit{Universidad del Pa\'is Vasco / Euskal Herriko Unibertsitatea}, (2000).
\bibitem[Gut1]{Gut1}
{\sc Guti\'errez, S.}, Non trivial $L^q$ solutions to the Ginzburg-Landau equation, \textit{Math. Ann.}, {\bf 328}, 1--25, (2004).
\bibitem[IK]{IK}
{\sc Ikebe, T., Kato, T.}, Uniqueness of the self-adjoint extension of singular elliptic differential operators, \textit{Arch. Rational Mech. Anal.}, {\bf 9}, 77--92, (1962).
\bibitem[IS]{IS}
{\sc Ikebe, T., Saito, Y.}, Limiting absorption method and absolute continuity
for the Schr\"odinger operator, \textit{J. Math. Kyoto Univ.}, {\bf 12}, 513--542, (1972).
\bibitem[KPV]{KPV}
{\sc Kenig, C.E., Ponce, G., Vega, L.}, Small solutions to nonlinear Schr\"odinger equations, \textit{Ann. Inst. Henri Poincar\'e}, {\bf 10}, 255--288, (1993).
\bibitem[KRS]{KRS}
{\sc Kenig, C.E., Ruiz, A., Sogge, D.}, Uniform Sobolev inequalities and unique continuation for second order constant coefficient differential operators, \textit{Duke Math. J.}, {\bf 55}, 329--347, (1987).
\bibitem[LS]{LS}
{\sc Leinfelder, H., Simander, C.}, Schr\"odinger operators with singular magnetic vector potentials, \textit{Math. Z.}, {\bf 176}, 1--19, (1981).
\bibitem[R]{R}
{\sc Regbaoui, R.}, Strong Uniqueness for Second Order Differential Operators, \textit{J. Differential Equations}, {\bf 141}, 201--217, (1997).
\bibitem[RV]{RV}
{\sc Ruiz, A., Vega, L.}, On local regularity of Schr\"odinger equations, \textit{Internat. Math. Res. Notices}, 13--27, (1993).
\bibitem[Z]{Z}
{\sc Zubeldia, M.}, Limiting absorption principle for the electromagnetic Helmholtz equation with singular potentials, arXiv:1104.4237.
\bibitem[Z1]{Z1}
{\sc Zubeldia, M.}, The forward problem for electromagnetic Helmholtz equation, Phd. Thesis, \textit{Universidad del Pa\'is Vasco / Euskal Herriko Unibertsitatea}, (2012).
\end{thebibliography}
\end{document}